\newtheorem{theorem}{Theorem}
\newtheorem{corollary}[theorem]{Corollary}
\newtheorem{lemma}[theorem]{Lemma}
\theoremstyle{definition}
\newtheorem{definition}[theorem]{Definition}
\newtheorem{example}[theorem]{Example}
\theoremstyle{remark}
\newtheorem{remark}[theorem]{Remark}
\DeclareMathOperator{\conv}{conv}
\DeclareMathOperator{\pos}{pos}
\DeclareMathOperator{\interior}{int}
\newcommand\cL{{\mathcal L}}
\newcommand\cT{{\mathcal T}}
\newcommand\MM{{\mathbb M}}
\newcommand\RR{{\mathbb R}}
\newcommand\ZZ{{\mathbb Z}}
\newcommand\smallSetOf[2]{\{#1 \colon #2\}}
\newcommand\doi[1]{\href{http://dx.doi.org/#1}{\texttt{doi:#1}}}
\newcommand\polymake{\texttt{polymake}\xspace}
\tikzstyle{lattice} = [draw=red, fill=red]
\tikzstyle{valid} = [draw=red, thick, fill=white]
\tikzstyle{intersect} = [draw=orange, fill=orange]
\tikzstyle{boundary} = [draw=blue, fill=blue]
\tikzstyle{triangle} = [draw=black, thick, fill=blue!20]
\tikzstyle{inequality} = [draw=green, thick]
\tikzstyle{someline} = [draw=black, dashed]
\title{Forbidden Patterns in Tropical Plane Curves}
\author{Michael Joswig \and Ayush Kumar Tewari}
\address[M.~Joswig]{
  Technische Universität Berlin,
  Chair of Discrete Mathematics/Geometry;
  Max-Planck Institute for Mathematics in the Sciences, Leipzig
}
\email{joswig@math.tu-berlin.de}
\address[A.~K.~Tewari]{
  Technische Universität Berlin,
  Chair of Discrete Mathematics/Geometry
}
\email{tewari@math.tu-berlin.de}
\subjclass{52B20 (05C10, 14T15)}
\keywords{triangulations, splits, moduli of tropical plane curves}
\thanks{%
  M.~Joswig has been supported by Deutsche Forschungsgemeinschaft (EXC 2046: \enquote{MATH$^+$}, SFB-TRR 195: \enquote{Symbolic Tools in Mathematics and their Application}, and GRK 2434: \enquote{Facets of Complexity}).
  A.~K.~Tewari has been supported by Deutsche Forschungsgemeinschaft (SFB-TRR 195: \enquote{Symbolic Tools in Mathematics and their Application})}
\begin{document}

\begin{abstract}
  Tropical curves in $\RR^2$ correspond to metric planar graphs but not all planar graphs arise in this way.
  We describe several new classes of graphs which cannot occur.
  For instance, this yields a full combinatorial characterization of the tropically planar graphs of genus at most five.
\end{abstract}

\maketitle

\section{Introduction}

A tropical plane curve is a metric graph, $G$, embedded in $\RR^2$, which is dual to a regular subdivision $\Delta$ of some lattice polygon $P$.
In the smooth case $\Delta$ is a unimodular triangulation of $P$, and the planar graph $G$ is trivalent of genus $g$, where $g$ agrees with the number of interior lattice points of $P$.
Such graphs have been called \emph{tropically planar} by Coles et al.~\cite{M19}.
Here we are concerned with the question: Which graphs $G$ occur in this way?

For any fixed $g\geq 1$, Castryck and Voight gave a procedure for finding finitely many lattice polygons with exactly $g$ interior lattice points whose triangulations suffice to produce all such graphs \cite{CV09}.
This was employed in \cite{JB15} and \cite{M19} to computationally determine the tropically planar graphs of genus $g\leq 7$.
Despite its success that approach is severely limited by the combinatorial explosion of the number of planar graphs and the number of triangulations of the relevant polygons.
While it is doubtful that this can be pushed much further, here we seek to find obstructions for a given trivalent graph to be realizable as a tropical plane curve.
It was known previously that graphs with a \emph{sprawling node} \cite[Proposition~4.1]{CartwrightManjunathYao:2016}, \emph{crowded} graphs \cite[Lemma 3.5]{M17} and \emph{TIE-fighter} graphs \cite[Theorem 3.4]{M19} cannot occur; cf.\ Figure~\ref{fig:known-obstructions}.
Here we add further forbidden patterns to this list.
As an immediate consequence, for the first time, we provide a complete combinatorial characterization of the tropically planar graphs of genus $g\leq 5$, in contrast to the proof in \cite{JB15} which rests on substantial computer support.
More importantly, we identify a particularly interesting \enquote{boundary case}:
if it is realizable, a \emph{graph with a heavy cycle} forces a number of geometric restrictions for the polygon $P$ and the triangulation~$\Delta$.
As an example, this leads us to studying a natural family of triangulations, which we call \emph{anti-honeycombs}.
In this way we get a glimpse of tropically planar graphs of genus $g=8$ and beyond.

While this work is motivated by the desire to understand tropical curves and their moduli, here we concentrate on the combinatorial aspects,
which means we do not take edge lengths of metric graphs into account.
This comes with the advantage that our results also hold for triangulations which are not regular.

\paragraph{\bf Acknowledgment}
We are grateful to Dominic Bunnett and Marta Panizzut for helpful comments on an early draft of this article and for requiring a minor correction.

\section{Lattice Polygons and Tropical Plane Curves}

Let $V\subset\RR^d$ be a finite set of points.
A \emph{polytopal subdivision} of $V$ is a polytopal complex which covers the convex hull $\conv V$ and uses (a subset of) the given point set $V$ as its vertices.
If that subdivision is induced by a height function on $V$, it is called \emph{regular}.
A subdivision which consists of simplices is a \emph{triangulation}.
A comprehensive reference on the subject is the monograph~\cite{Triangulations} by De Loera, Rambau and Santos.
Here we will be concerned with the following case.
Let $P$ be a (convex) \emph{lattice polygon}, i.e., $P$ is the convex hull of finitely many points in $\ZZ^2$, and $V=P\cap\ZZ^2$ is the set of lattice points in $P$.
A triangulation $\Delta$ of $V$ is \emph{unimodular} if each triangle in $\Delta$ has normalized area one, i.e., Euclidean area $\tfrac{1}{2}$.
This is the case if and only if $\Delta$ uses all points in $V$.
We write $\partial P$ for the boundary of $P$ and $\interior P$ for its interior.
The number $g(P)=\#(\interior P)$ of interior lattice points is the \emph{genus} of $P$; cf.\ Figure~\ref{fig:antihoney} for an example.

Let $\Delta$ be a not necessarily unimodular triangulation of $V$.
The \emph{dual graph} $\Gamma=\Gamma(\Delta)$ is the abstract graph whose nodes are the triangles of $\Delta$; they form an edge in $\Gamma$ if two triangles share an edge in $\Delta$.
The dual graph is necessarily connected and planar, and each node has degree at most three.

\begin{figure}[th]
  \begin{tikzpicture}[scale=0.5]
  
  \draw[] (30:0) -- (30:3);
  \draw[] (150:0) -- (150:3);
  \draw[] (270:0) -- (270:2.5);
  
  \draw[fill = gray!40!white, draw=black] (1.5,1.2) rectangle ++(1.5,1.5);
  \draw[fill = gray!40!white, draw=black] (-3,1.2) rectangle ++(1.5,1.5);
  \draw[fill = gray!40!white, draw=black] (-0.75,-3.5) rectangle ++(1.5,1.5);
  
  \fill[black] (30:0) circle (.1cm) node[align=left,   above]{};

  \end{tikzpicture}
  \qquad
  \begin{tikzpicture}[scale=0.5]
  
  \draw[] (-1.5,1.5) -- (1,4);
  \draw[] (1,1.5) -- (1,4);
  \draw[] (3.5,1.5) -- (1,4);
  \draw[] (-1.5,0) -- (1,-2.5);
  \draw[] (1,0) -- (1,-2.5);
  \draw[] (3.5,0) -- (1,-2.5);
  
  \draw[fill = gray!40!white, draw=black] (-2,0) rectangle ++(1,1.5);
  \draw[fill = gray!40!white, draw=black] (0.5,0) rectangle ++(1,1.5);
  \draw[fill = gray!40!white, draw=black] (3,0) rectangle ++(1,1.5);
  
  \fill[black] (1,4) circle (.1cm);
  \fill[black] (1,-2.5) circle (.1cm);
  
  \end{tikzpicture}
  \qquad
  \begin{tikzpicture}[scale=0.5]
  \draw[] (0,0) -- (-1,0);
  \draw[] (-1,-1) -- (-2,-1);
  \draw[] (-1,1) -- (-1,0);
  \draw[] (-1,0) -- (-1,-1);
  \draw[] (0,0) -- (1,1);
  \draw[] (0,0) -- (1,-1);
  \draw[] (1,1) -- (1,2);
  \draw[] (3,1) -- (4,0);
  \draw[] (1,-1) -- (3,-1);
  \draw[] (1,-1) -- (1,-2);
  \draw[] (1,-2) -- (3,-2);
  \draw[] (3,-1) -- (3,-2);
  \draw[] (3,-1) -- (4,0);
  \draw[] (4,0) -- (5,0);
  \draw[] (5,0) -- (5,1);
  \draw[] (5,0) -- (5,-1);
  \draw[] (5,1) -- (6,1);
  \draw[] (5,-1) -- (6,-1);
  \draw[] (6,1) -- (6,-1);
  \fill[gray!40!white] (1,1) rectangle (3,2);
  \draw[] (1,1) -- (1,2);
  \draw[] (1,1) -- (3,1);
  \draw[] (3,1) -- (3,2);
  \draw[] (1,2) -- (3,2);
  \fill[gray!40!white] (1,-1) rectangle (3,-2);
  \draw[] (1,-1) -- (1,-2);
  \draw[] (1,-1) -- (3,-1);
  \draw[] (1,-2) -- (3,-2);
  \draw[] (3,-1) -- (3,-2);
  \fill[gray!40!white] (5,-1) rectangle (6,1);
  \draw[] (5,-1) -- (5,1);
  \draw[] (5,1) -- (6,1);
  \draw[] (6,1) -- (6,-1);
  \draw[] (5,-1) -- (6,-1);
  \fill[black] (0,0) circle (.1cm);
  \fill[black] (4,0) circle (.1cm);
  \fill[gray!40!white] (-2,-1) rectangle (-1,1);
  \draw[] (-2,-1) -- (-2,1);
  \draw[] (-2,1) -- (-1,1);
  \draw[] (-1,1) -- (-1,-1);
  \draw[] (-1,-1) -- (-2,-1);
  \end{tikzpicture}
  \caption{Graph with a sprawling node (left), a crowded graph (center), and a TIE-fighter graph (right). Each box represents some subgraph of positive genus.}
  \label{fig:known-obstructions}
\end{figure}
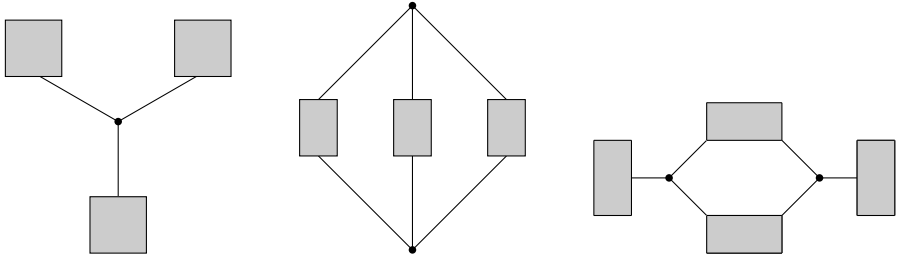

Next we describe a procedure to obtain a certain graph minor of $\Gamma$.
First, if there is a node of degree one, we delete it together with the unique incident edge.
We repeat this step until no nodes of degree one are left.
The remaining edges are \emph{nonredundant}.
Second, if there is a node of degree two, we delete the node and join its neighbors by an edge.
Again we repeat until there are no more nodes of degree two.
The resulting graph is the \emph{skeleton}, which we denote $G=G(\Delta)$.
By construction the skeleton is a trivalent planar graph, and it does not depend on the ordering in which the edge deletions and contractions are performed.
In this way each edge of $G$ arises as an edge path in~$\Gamma$.
This yields a surjective map, which we denote $\eta$, from the nonredundant edges of $\Gamma$ onto the edges of $G$.
Note that this \emph{contraction map} $\eta$ is undefined for any edge which is redundant.
This elementary description of the skeleton is algorithmic in nature and serves our purposes.

A \emph{split} is a subdivision of $V$ with exactly two maximal cells; it is necessarily regular \cite[Lemma~3.5]{HJ08}.
If $U$ and $W$ are the two maximal cells of a split, then the intersection $U\cap W$ is a common edge of the two convex polygons $U$ and $W$.
It spans the corresponding \emph{split line}.
Two splits of $V$ are \emph{weakly compatible} if there is a triangulation which refines them both.
Moreover, two splits are \emph{(strongly) compatible} if their split lines do not meet in the interior of~$P$.
Compatibility implies weak compatibility.
The split lines of two weakly compatible splits which are not strongly compatible must meet in a point in $V$, i.e., an interior lattice point of $P$.
An edge $e$ of the connected graph $G$ is a \emph{cut edge} if deleting $e$ creates two connected components.
Otherwise $e$ lies in some cycle of $G$.

The following three technical results are extracted from the proof of \cite[Theorem 3.4]{M19}, where cut edges are called \enquote{bridges}.
\begin{lemma}\label{lem:cut-split}
  Let $e$ be a cut edge of $G$.
  Then $\eta^{-1}(e)$ contains a cut edge of\/~$\Gamma$, which is dual to an edge, $s$, of $\Delta$.
  Moreover, the vertices of $s$ lie on the boundary $\partial P$, and $s$ spans a split line of $V$.
\end{lemma}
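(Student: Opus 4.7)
The plan is to prove the three assertions in sequence, working first with the combinatorics of the contraction map $\eta$, then with the local geometry around the vertices of $s$, and finally with the global geometry of $P$.

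For the first assertion, I would let $P_e \subseteq \Gamma$ be the edge path in the $2$-core of $\Gamma$ whose image under $\eta$ is $e$. The endpoints of $P_e$ are trivalent nodes (vertices of $G$); every \emph{internal} vertex of $P_e$ has degree two in the $2$-core, because those are exactly the vertices that got suppressed when forming $G$. Now suppose, for contradiction, that no edge of $P_e$ is a cut edge of $\Gamma$. Since each edge in the $2$-core lies on some cycle, pick a cycle $C$ through the first edge of $P_e$; as $C$ enters an internal vertex $u_i$ of $P_e$, it must exit via the only other edge at $u_i$, which is the next edge of $P_e$. Iterating, $C$ must contain all of $P_e$. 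But then $C$ descends to a cycle in $G$ through $e$, contradicting the assumption that $e$ is a cut edge. Hence some edge of $P_e$ is a cut edge of $\Gamma$; let $s$ be its dual edge in $\Delta$.

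For the second assertion, write $T_1, T_2$ for the two components of $\Gamma$ after deleting the cut edge, and let $v$ be one vertex of $s$. Suppose $v$ lies in $\interior P$. Then the triangles of $\Delta$ incident to $v$ form a closed fan $t_1, r_1, \dotsc, r_k, t_2$ where $t_1, t_2$ are the two triangles sharing $s$. In $\Gamma$, consecutive triangles in this fan share an edge through $v$, so they are adjacent. Since $t_1 \in T_1$ and $t_2 \in T_2$, somewhere in the sequence $r_1, \dotsc, r_k$ there must be a transition from $T_1$ to $T_2$, producing an edge of $\Gamma$ (distinct from the dual of $s$) joining $T_1$ and $T_2$. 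This contradicts $s$ being the dual of a cut edge, so $v \in \partial P$, and likewise for the other vertex of $s$.

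For the third assertion, let $L$ be the affine line spanned by $s$. Since the relative interior of $s$ lies in $\interior P$, the line $L$ is not a supporting line of $P$, hence crosses transversally into $\interior P$ at each endpoint of $s$; by convexity of $P$, it follows that $L \cap P = s$. Thus $s$ is a chord of $P$ whose removal cleaves $P$ into two convex subpolygons $P_1 = |T_1|$ and $P_2 = |T_2|$ (convex because each is the intersection of $P$ with one of the two closed half-planes bounded by $L$). The pair $\{P_1, P_2\}$ is a polytopal subdivision of $V$ with exactly two maximal cells sharing the edge $s$, i.e., a split, and $L$ is its split line. The only real obstacle is step one, namely the translation of the cut-edge property from $G$ back up to $\Gamma$; once that is in hand, steps two and three follow from the local fan argument and convexity of $P$.
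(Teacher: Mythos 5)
Your proof is correct. Note that the paper does not actually prove this lemma: it is stated as one of three facts ``extracted from the proof of \cite[Theorem 3.4]{M19}'', so there is no in-paper argument to compare against; your write-up supplies the standard proof that that citation points to, and all three steps are sound. The lifting of the cut-edge property is right: a cycle of $\Gamma$ cannot use redundant edges, so once it enters an internal (degree-two) vertex of the path $\eta^{-1}(e)$ it is forced along the whole path, and its image is a closed walk in $G$ traversing $e$ exactly once, contradicting $e$ being a cut edge. The closed-fan argument at an interior vertex and the convexity argument are likewise correct. Two small points are asserted rather than justified and deserve a sentence each: first, that the relative interior of $s$ lies in $\interior P$ is not automatic for an arbitrary edge of $\Delta$ but follows here because $s$ is dual to an edge of $\Gamma$, hence is shared by two triangles and is not a boundary edge of $P$; second, the identification $|T_i| = P \cap \overline{H_i}$ (which is what makes $\{P_1,P_2\}$ a genuine split with split line $L$) needs the observation that any two triangles adjacent across an edge of $\Delta$ other than $s$ have their interiors in the same component of $P \setminus L$, since $L\cap P = s$ and distinct edges of a triangulation have disjoint relative interiors; this is exactly the same side-preservation argument as in your fan step, so it costs nothing to add.
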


Note that $\eta^{-1}(e)$ in Lemma~\ref{lem:cut-split} may contain several cut edges of $\Gamma$, and so the edge $s$ of the triangulation $\Delta$ may not be unique.

\begin{lemma}\label{lem:cycle-vertex}
  Let $e$ be an edge between $v_{1}$ and $v_{2}$ in a cycle, $C$, of $G$.
  Further, let $T_1$ and $T_2$ be triangles in $\Delta$ which correspond to $v_{1}$ and $v_{2}$ on the path $\eta^{-1}(e)$.
  Then $T_1$ and $T_2$ share an interior lattice point of $P$, and this is dual to $C$.
\end{lemma}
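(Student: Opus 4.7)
The plan is to show that the path $\pi = \eta^{-1}(e)$ in $\Gamma$ pivots around a single interior lattice point $p$ of $P$, which is then the common vertex of $T_1$ and $T_2$. Write $\pi = (T_1 = U_0, U_1, \ldots, U_k = T_2)$ with dual edges $f_i = U_i \cap U_{i+1}$ of $\Delta$; for each intermediate index $1 \le i \le k-1$, let $p_i$ denote the unique vertex of $U_i$ shared by $f_{i-1}$ and $f_i$, and let $e_i^{*}$ denote the third (opposite) edge of $U_i$.

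First I would establish that both endpoints of each $e_i^{*}$ lie on $\partial P$. Since $U_i$ has degree two in the $2$-core of $\Gamma$, either $e_i^{*} \subset \partial P$ directly, or $e_i^{*}$ is the root of a pendant subtree that gets deleted by iterated leaf removal. In the latter case, the subtree occupies a simply connected region of $P$ attached to the $2$-core only along $e_i^{*}$; tracing back through its leaves (whose two non-root edges must lie on $\partial P$) shows the remaining boundary of the region is an arc of $\partial P$, forcing both endpoints of $e_i^{*}$ onto $\partial P$.

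Second, a local vertex-matching argument shows that whenever $p_i \in \interior P$ we must have $p_{i+1} = p_i$. Indeed, if $p_{i+1} \ne p_i$, then $p_{i+1}$ is the other endpoint of $f_i$, and examining $U_{i+1}$ reveals that $e_{i+1}^{*}$ is incident to $p_i$; the first step then forces $p_i \in \partial P$, a contradiction. By symmetry $p_{i-1} = p_i$ holds as well, so once the path encounters an interior vertex, the $p_i$ stabilize at a single interior lattice point $p$, which is a common vertex of every $U_i$ and in particular of $T_1$ and $T_2$.

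To produce such an interior $p_i$, I use the cycle hypothesis. Suppose for contradiction that every intermediate $p_i$ lies on $\partial P$; by the first step, each intermediate $U_i$ has all three vertices on $\partial P$ and each $f_i$ is a chord of $P$, i.e., a split edge of $\Delta$. A planar argument then shows that removing one of these dual edges disconnects the $2$-core of $\Gamma$, and by the converse of Lemma~\ref{lem:cut-split} this forces $e$ to be a cut edge of $G$, contradicting $e \in C$. Finally, applying the same analysis to each edge of $C$ identifies $p$ as the interior lattice point dual to $C$: the lift $\tilde C \subset \Gamma$ traces out the triangles incident to $p$, which is the cycle in $\Gamma$ dual to the link of $p$. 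The main obstacle is ruling out the all-boundary scenario in the third step: combining the pruned-subtree structure of $\Gamma$ with the splitting behaviour of chord edges effectively requires a converse of Lemma~\ref{lem:cut-split} in this special geometric setting.
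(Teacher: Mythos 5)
The paper does not actually prove this lemma: it is stated without proof, being ``extracted from the proof of \cite[Theorem~3.4]{M19}'', so your self-contained argument is by necessity a different route, and most of it is sound. The pendant-subtree argument does place both endpoints of each opposite edge $e_i^{*}$ on $\partial P$ (the pruned subtree meets the rest of the triangulation only along $e_i^{*}$, so the remainder of its region's boundary lies in $\partial P$); the pivot-propagation step is correct; and the all-boundary scenario is correctly excluded, since an interior edge of $\Delta$ with both endpoints on $\partial P$ separates $P$, so its dual edge is a cut edge of $\Gamma$ and hence of its $2$-core, and suppressing degree-two nodes sends it to a cut edge of $G$. This ``converse of Lemma~\ref{lem:cut-split}'' is genuinely needed but routine; it is not the main obstacle you make it out to be. You should, however, also treat the degenerate case where $\eta^{-1}(e)$ has no intermediate triangles: if $T_1$ and $T_2$ are adjacent your pivot argument is vacuous, although the same chord argument shows their common edge must have an endpoint in $\interior P$.

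The genuine gap is the final claim that the common vertex is dual to $C$. Running your analysis on each edge of $C$ produces one interior lattice point per edge, and nothing you prove forces these points to coincide: at a trivalent node $v$ of $C$ the lifts of the two incident cycle edges enter and leave the triangle $T_v$ through two of its three edges, and the pivot vertex of one path need not equal the pivot vertex of the other (the two edges share a vertex of $T_v$, but that shared vertex need not be either pivot). The identification really requires the planar face structure, which your local analysis never invokes: for $C$ bounding a single bounded face of $G$ (the only situation in which ``dual to $C$'' is well defined, and the one used in the paper), the lift $\tilde C$ bounds a single bounded face of $\Gamma$, and the bounded faces of $\Gamma$ are precisely the cycles of triangles around the interior lattice points; hence every triangle on $\tilde C$, in particular $T_1$ and $T_2$, contains the lattice point dual to that face. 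This observation proves the entire lemma in a few lines and renders most of the local analysis unnecessary. The ``dual to $C$'' clause is not decoration, either: Theorem~\ref{thm:sprawling} needs all three edges of the sprawling triangle to yield the same point $z$, which is exactly the coincidence your argument leaves unproved.
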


\begin{lemma}\label{lem:area}
  Let $v\in V$ be some lattice point in $P$ with two incident triangles $T_1=\conv\{z,a_1,b_1\}$ and $T_2=\conv\{z,a_2,b_2\}$ both of which are in $\Delta$.
  Further, let $L_i$ be the line spanned by $a_i$ and $b_i$, for $i=1,2$.
  Suppose that $L_1$ and $L_2$ meet in some point, say $w$, such that $a_i$ is closer to $w$ than $b_i$, for $i=1,2$.
  Then the interior of the quadrangle $\conv\{z,a_1,w,a_2\}$ does not contain a point in~$V$, unless $a_1=a_2=w$.
\end{lemma}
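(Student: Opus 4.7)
The plan is to convert the fact that $T_1$ and $T_2$ belong to $\Delta$---which in the smooth setting the paper focuses on forces them to be unimodular---into two integer linear functionals whose value on a hypothetical interior lattice point of the quadrangle $Q := \conv\{z, a_1, w, a_2\}$ would have to lie strictly between $0$ and $1$, contradicting integrality.

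The first step is a lattice-height computation. Unimodularity of $T_1 = \conv\{z, a_1, b_1\}$ forces the edge $[a_1,b_1]$ to be primitive and $z$ to sit at lattice distance exactly one from $L_1$. Letting $n_1 \in \ZZ^2$ be the primitive integer normal to $L_1$, oriented so that $\langle n_1, z - a_1\rangle = 1$, the linear form $h_1(x) := \langle n_1, x - a_1\rangle$ is integer valued on $\ZZ^2$, vanishes on $L_1$ (hence at $a_1$, $b_1$, and $w$), and equals $1$ at $z$. I construct an analogous $h_2$ from $T_2$.

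Now suppose for contradiction that some $v \in V$ lies in $\interior(Q)$. In the generic case $a_1, a_2 \ne w$, the diagonal $[z, w]$ splits $Q$ into the triangles $\conv\{z, a_1, w\}$ and $\conv\{z, w, a_2\}$. After a relabelling $v$ lies in the first, so I write $v = \alpha z + \beta a_1 + \gamma w$ with nonnegative barycentric coordinates summing to one. Because $v \in \interior(Q)$, it avoids the two edges $[z, a_1]$ and $[a_1, w]$ of $Q$, yielding $\gamma > 0$ and $\alpha > 0$; hence $0 < \alpha < 1$. But then $h_1(v) = \alpha \in (0, 1)$ while $h_1(v) \in \ZZ$, a contradiction. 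If $v$ happens to lie on the diagonal $[z, w]$ itself, then $\beta = 0$ but the strict inequalities on $\alpha$ and $\gamma$ still hold, so the argument goes through unchanged.

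The degenerate cases are handled symmetrically. If $a_1 = w$ and $a_2 \ne w$, then $Q$ collapses to the triangle $\conv\{z, w, a_2\}$ and one reruns the argument using $h_2$ in place of $h_1$; the case $a_2 = w \ne a_1$ is symmetric. Finally, $a_1 = a_2 = w$ is precisely the excluded degeneracy in the statement, where $Q$ is a segment with empty interior. I do not anticipate a real obstacle beyond keeping track of which height functional applies in each configuration.
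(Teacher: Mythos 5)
The paper never proves this lemma: it is one of three statements announced as \enquote{extracted from the proof of \cite[Theorem 3.4]{M19}} and is cited rather than argued, so there is no in-paper proof to compare you against. Judged on its own terms, your lattice-height argument is correct and is arguably the cleanest route: unimodularity of $T_i$ makes $[a_i,b_i]$ primitive and places $z$ at lattice height one over $L_i$, so the integral affine form $h_i$ vanishes at $a_i$, $b_i$ and $w$, equals $1$ at $z$, and takes the value $\alpha\in(0,1)$ at any point of $\conv\{z,a_i,w\}$ that avoids both the edge on $L_i$ and the vertex $z$ -- which no lattice point can do. Your treatment of the degenerate cases $a_i=w$ is also right, and the excluded case $a_1=a_2=w$ is correctly identified as the one where the quadrangle has empty interior.

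Two caveats are worth making explicit, both inherited from the statement rather than introduced by you. First, unimodularity of $T_1$ and $T_2$ is genuinely needed and is not literally among the hypotheses: the lemma is stated in a section where $\Delta$ is expressly allowed to be non-unimodular, and without unimodularity the claim is false (take $z=(0,0)$, $T_1=\conv\{z,(2,1),(1,2)\}$ of normalized area $3$ and $T_2=\conv\{z,(2,-1),(1,-1)\}$; then $w=(4,-1)$ and the lattice point $(2,0)$ lies in the interior of the quadrangle). You import unimodularity from the smooth setting, which is the only setting in which the lemma is applied, but your write-up should state this as a standing hypothesis rather than an aside. Second, splitting $Q$ along the diagonal $[z,w]$ and treating $[z,a_1]$ and $[a_1,w]$ as boundary edges of $Q$ presupposes that $z,a_1,w,a_2$ are in convex position in that cyclic order; this is implicit in the paper's phrase \enquote{the quadrangle} and holds in the applications, where $T_1$ and $T_2$ lie on opposite sides of the line through $z$ and $w$, but a complete proof should dispose of the residual degenerations (say, $a_1\in\conv\{z,w,a_2\}$), each of which is handled by running your argument with the single functional $h_2$ or $h_1$ on the resulting triangle. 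Neither point breaks your argument; both deserve a sentence.
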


With this we can make a small first step to our main results.

\begin{lemma}\label{lem:compatible}
  Splits corresponding to distinct cut edges are compatible.
\end{lemma}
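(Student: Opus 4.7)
The plan is to argue by contradiction. Let $e_1$ and $e_2$ be two distinct cut edges of $G$. By Lemma~\ref{lem:cut-split}, each $e_i$ is associated with an edge $s_i$ of $\Delta$ whose vertices lie on $\partial P$ and which spans a split line $L_i$. Since the contraction map $\eta$ is a map defined on the nonredundant edges of $\Gamma$, the preimages $\eta^{-1}(e_1)$ and $\eta^{-1}(e_2)$ are disjoint, and so $s_1 \neq s_2$. Suppose for contradiction that $L_1$ and $L_2$ meet at some point $w \in \interior P$.

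The key geometric observation is that, because $P$ is convex and each $s_i$ is a chord of $P$ (both endpoints on $\partial P$, spanning a line that crosses the interior), the intersection $L_i \cap P$ coincides with the segment $s_i$ itself. Consequently, $w \in L_1 \cap L_2 \cap P$ must lie on both segments $s_1$ and $s_2$. But $s_1$ and $s_2$ are two distinct edges of the triangulation $\Delta$, hence can only meet at a common vertex; any such common vertex is an endpoint of some $s_i$ and therefore lies on $\partial P$, contradicting $w \in \interior P$.

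One subtle point to address is the possibility that $L_1 = L_2$, in which case the two lines "meet" everywhere along the common line rather than at an isolated point. Here one rules out this degeneracy by noting that if $s_1$ and $s_2$ are both contained in a common line $L$ crossing $\interior P$, then $L \cap P$ is a single chord, and any sub-segment of this chord with both endpoints on $\partial P$ must equal the entire chord; hence $s_1 = s_2$, a contradiction.

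The main obstacle is essentially that geometric convexity argument identifying $L_i \cap P$ with $s_i$; once this is in hand, the rest follows from the basic incidence property of triangulations. No appeal to Lemma~\ref{lem:cycle-vertex} or Lemma~\ref{lem:area} is needed, as the argument is purely about the boundary placement of the split edges guaranteed by Lemma~\ref{lem:cut-split}.
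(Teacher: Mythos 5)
Your proof is correct, and it takes a mildly but genuinely different route from the paper's. The paper argues via the split-compatibility machinery set up in Section~2: since both splits are refined by the same triangulation $\Delta$, they are weakly compatible, and the quoted fact from \cite{HJ08} then forces two weakly-but-not-strongly compatible split lines to meet in a point of $V$ lying in $\interior P$; this contradicts Lemma~\ref{lem:cut-split}, by which each split line contains precisely two points of $V$, both on $\partial P$. You instead bypass weak compatibility altogether: you observe that since the split line crosses $\interior P$ and both endpoints of $s_i$ lie on $\partial P$, convexity gives $L_i\cap P=s_i$, so any interior intersection point of the two lines would have to lie on both closed segments; as $s_1\neq s_2$ are distinct edges of the polytopal complex $\Delta$, they can only meet in a common vertex, which lies on $\partial P$ --- contradiction. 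Your handling of the degenerate case $L_1=L_2$ and of the non-obvious point $s_1\neq s_2$ (via disjointness of the fibers of $\eta$) is careful and correct. What the paper's route buys is uniformity with the compatibility language used throughout (and it localizes the would-be intersection to a lattice point, not just an interior point); what yours buys is self-containedness, since it needs only convexity and the face-intersection property of triangulations rather than the cited result on weakly compatible splits.
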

\begin{proof}
  From Lemma~\ref{lem:cut-split} the cut edges $e_1$ and $e_2$ yields two split lines, $S_1$ and~$S_2$, which may not be unique.
  Unless $S_1$ and $S_2$ are compatible, they must meet in a point of the point configuration $V$, which does not lie on $\partial P$, i.e., an interior lattice point of $P$.
  Yet, by Lemma~\ref{lem:cut-split} the line $S_1$ (and similarly $S_2$) contains precisely two points of $V$, and neither lies in the interior.
\end{proof}

The connection to tropical geometry comes about as follows.
See the book by Maclagan and Sturmfels \cite{Tropical+Book} for the general context and \cite{JB15} for a detailed analysis of the case which is of interest here.
Let $\Phi$ be a bivariate tropical polynomial.
Its vanishing locus, the \emph{tropical hypersurface} $\cT=\cT(\Phi)$, is a tropical plane curve in~$\RR^2$.
The Newton polygon of $\Phi$ is a lattice polygon, and the terms of $\Phi$ correspond to lattice points in~$P$.
Moreover, the coefficients of $\Phi$ induce a regular subdivision, $\Delta$, on those lattice points which are dual to $\cT$, i.e., $\cT$ essentially agrees with $\Gamma(\Delta)$.
The tropical plane curve $\cT$ is \emph{smooth} if its dual subdivision $\Delta$ is unimodular.
Conversely, each unimodular regular triangulations of $P$ arises in this way.
In that case the edges of $G$ (with their lengths, which we do not need here) describe $\cT$ as a point in the moduli space $\MM_g^{\text{planar}}$ of tropical plane curves of genus~$g$.
Skeleta of unimodular regular triangulations of lattice polygons are \emph{tropically planar} or \enquote{troplanar} graphs in \cite{M19}.

\begin{example}\label{exmp:anti-honey-2}
  We consider the \emph{anti-honeycomb} triangle
  \begin{equation}\label{eq:anti-honey-2}
    A_{(-2,0;-2,0;-2,0)} \ = \ \conv\{(2,2),(-2,0),(0,-2)\} \enspace,
  \end{equation}
  which occurs as $Q_3^{(4)}$ in \cite{JB15}.
  The genus is $g(A_{(-2,0;-2,0;-2,0)})=4$.
  We call the unimodular triangulation $\Delta_{(-2,0;-2,0;-2,0)}$, shown in Figure~\ref{fig:antihoney} (left), the \emph{anti-honeycomb triangulation} of $A_{(-2,0;-2,0;-2,0)}$.
  Its skeleton, shown in Figure~\ref{fig:antihoney} (right) and called $(303)$ in \cite{JB15}, features three cut edges which correspond to three compatible splits of $A_{(-2,0;-2,0;-2,0)}$.
  The triangulation $\Delta_{(-2,0;-2,0;-2,0)}$ is regular, whence it defines a moduli cone of tropical plane curves.
  That cone is 7-dimensional, while the entire moduli space $\MM_4^{\text{planar}}$ has dimension nine; see \cite[Table~4]{JB15}.
  See Section~\ref{sec:anti-honey} for a more comprehensive discussion of anti-honeycomb polygons, their triangulations and the notation \eqref{eq:anti-honey-2}.
\end{example}

\begin{figure}[th]\centering
  \begin{tikzpicture}[scale=0.78]
    \draw[] (-2,0) -- (2,2);
    \draw[] (-2,0) -- (0,-2);
    \draw[] (0,-2) -- (2,2);
    \draw[] (2,2) -- (1,1);
    \draw[] (1,1) -- (1,0);
    \draw[] (1,1) -- (0,1);
    \draw[] (0,1) -- (1,0);
    \draw[] (0,1) -- (0,0);
    \draw[] (1,0) -- (0,0);
    \draw[] (0,1) -- (-1,0);
    \draw[] (0,1) -- (-1,-1);
    \draw[] (1,0) -- (-1,-1);
    \draw[] (1,0) -- (0,-1);
    \draw[] (-1,0) -- (-2,0);
    \draw[] (-1,0) -- (-1,-1);
    \draw[] (1,0) -- (0,-1);
    \draw[] (0,-1) -- (-1,-1);
    \draw[] (0,-1) -- (0,-2);
    \draw[] (0,0) -- (-1,-1);
    \fill[black] (0,-2) circle (.07cm);
    \fill[black] (-2,0) circle (.07cm);
    \fill[black] (2,2) circle (.07cm);
    \fill[black] (0,1) circle (.07cm);
    \fill[black] (1,0) circle (.07cm);
    \fill[black] (0,0) circle (.07cm);
    \fill[black] (1,1) circle (.07cm);
    \fill[black] (-1,-1) circle (.07cm);
    \fill[black] (0,-1) circle (.07cm);
    \fill[black] (-1,0) circle (.07cm);
  \end{tikzpicture}
  \qquad
  \begin{tikzpicture}[scale=0.22]
    \draw[blue] (0,0) -- (2,0);
    \draw[blue] (0,0) -- (0,2);
    \draw[blue] (0,2) -- (2,0);
    \draw[] (0,0) -- (-2,-2);
    \draw[] (0,2) -- (-1.5,5);
    \draw[orange] (2,0) -- (5,-1.5);
    \draw[red] (5,-1.5) -- (7,-1.5);
    \draw[red] (5,-1.5) -- (7,-3.5);
    \draw[red] (7,-1.5) -- (7,-3.5);
    \draw[brown] (7,-3.5) -- (8.5,-6.5);
    \draw[green] (8.5,-6.5) -- (8.5,-8.5);
    \draw[green] (8.5,-8.5) -- (10.5,-8.5);
    \draw[green] (8.5,-6.5) -- (10.5,-8.5);
    \draw[] (8.5,-8.5) -- (6.5,-10.5);
    \draw[] (10.5,-8.5) -- (13.5,-10);
    \draw[pink] (7,-1.5) -- (10,1.5);
    \draw[magenta] (10,1.5) -- (10,3.5);
    \draw[magenta] (10,1.5) -- (12,1.5);
    \draw[magenta] (10,3.5) -- (12,1.5);
    \draw[] (10,3.5) -- (8.5,6.5);
    \draw[] (12,1.5) -- (15,0);
  \end{tikzpicture}
  \qquad
  \begin{tikzpicture}[scale=0.52]
    \draw[red] (30:1) -- (150:1) -- (270:1) -- cycle;
    
    \draw[pink] (30:1) -- (30:3);
    \draw[orange] (150:1) -- (150:3);
    \draw[brown] (270:1) -- (270:3);
	
    \filldraw[fill=white,draw=magenta] (30:3) circle (0.5cm);
    \filldraw[fill=white,draw=blue] (150:3) circle (0.5cm);
    \filldraw[fill=white,draw=green] (270:3) circle (0.5cm);
  \end{tikzpicture}
  \caption{Anti-honeycomb triangulation $\Delta_{(-2,0;-2,0;-2,0)}$ of genus~4 (left), its dual graph (center), and the corresponding skeleton (right)}
  \label{fig:antihoney}
\end{figure}
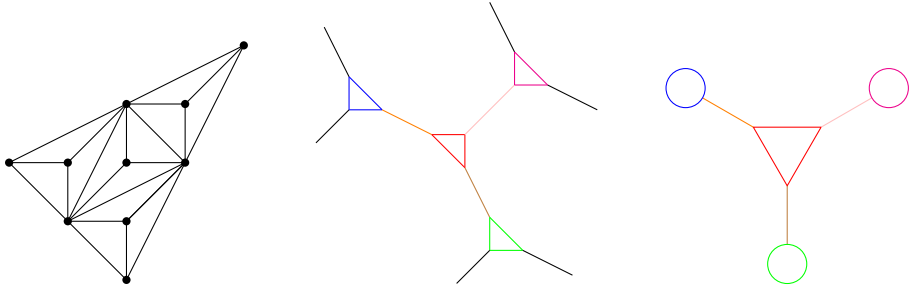

We now sketch the forbidden patterns which are known already.
A node in $G$ is \emph{sprawling} if its removal leaves three connected components; cf.\ Figure~\ref{fig:known-obstructions} (left).
This obstruction to tropical planarity was identified in \cite[Proposition~4.1]{CartwrightManjunathYao:2016} and \cite[Proposition 8.3]{JB15}.
Note that graphs with a sprawling node were called \enquote{sprawling} in \cite{JB15} and \cite{M19}.
A planar embedding of a graph $G$ is called \emph{crowded} if either: there exist two bounded regions sharing at least two edges; or, there exists a bounded region sharing an edge with itself.
If all planar embeddings of $G$ are crowded, then $G$ itself is said to be \emph{crowded}.
In \cite[Lemma 3.5]{M17}, it is shown that crowded graphs can never be tropically planar.
Additionally, \cite[Corollary 3.7]{M17} describes a family of crowded graphs, and this is depicted in Figure~\ref{fig:known-obstructions} (center).
A graph is a \emph{TIE-fighter} if it looks like the one in Figure~\ref{fig:known-obstructions} (right).
TIE-fighter graphs can never be tropically planar was shown in \cite[Theorem 3.4]{M19}.

\section{Heavy Cycles and Sprawling Triangles}

Let $P$ be a lattice polygon with precisely $g$ interior lattice points.
Moreover, let $\Delta$ be a unimodular triangulation of $P$ with dual graph $\Gamma=\Gamma(\Delta)$ and skeleton $G(\Delta)=G$.
The contraction map $\eta$ sends nonredundant edges of $\Gamma$ to edges of $G$.
The $g$ interior lattice points of $P$ bijectively correspond to the bounded regions of the planar graph $G$.
By Euler's formula we have $g = m - n + 1$, where $m$ and $n$ are the numbers of edges and nodes of $G$, respectively.
Our arguments in this section do not require $\Delta$ to be regular.
That is, our results extend to a class of planar graphs, which is slightly more general than tropical plane curves.

\begin{lemma}\label{lem:abz}
  Assume that $P$ contains a unimodular triangle with vertices $a,b,z$ such that neither $a$ nor $b$ is a vertex of $P$, and $z$ is an interior lattice point.
  If $a$ and $b$ lie on $\partial P$ then either $a$ and $b$ lie on a common edge of $P$ or the lattice point $a+b-z$ is contained in $P$.
\end{lemma}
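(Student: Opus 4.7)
Plan: The plan is to exploit the unimodularity of $T = \conv\{a,b,z\}$ by changing to coordinates in which $z = (0,0)$, $a = (1,0)$, $b = (0,1)$; the integral affine map doing this carries $P$ to a lattice polygon and sends $w = a + b - z$ to $(1,1)$. The conclusion to prove becomes: either $a$ and $b$ lie on a common edge of $P$, or $(1,1) \in P$. Because $a$ is on $\partial P$ but not a vertex, it sits in the relative interior of a unique edge $e_a$ of $P$; let $(p,q)$ be its primitive lattice direction, oriented so $q > 0$. This orientation is possible because $\ell_a$ cannot be horizontal: that would place $z$ on $\ell_a$, contradicting $z \in \interior P$.

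The first case is $p \geq 0$. Since $a$ is in the relative interior of $e_a$ and $(p,q)$ is the primitive direction of $\ell_a$, the adjacent lattice point $a + (p,q) = (1+p,q)$ also belongs to $e_a \subset P$. A direct barycentric calculation then yields
\[
(1,1) \;=\; \frac{q-1}{p+q}\,(1,0) \;+\; \frac{p}{p+q}\,(0,1) \;+\; \frac{1}{p+q}\,(1+p,q),
\]
a convex combination since $q \geq 1$ and $p \geq 0$. By convexity of $P$, $w = (1,1) \in P$.

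The remaining case is $p < 0$; set $p' = -p \geq 1$. The inequality cutting out $P$ along $\ell_a$ reads $qx + p'y \leq q$, and evaluating at $b$ forces $p' \leq q$. If $p' = q$, coprimality of $(p',q)$ gives $p' = q = 1$, so $\ell_a = \{x+y = 1\}$ passes through both $a$ and $b$; one primitive step from $a$ along $e_a$ in direction $(-1,1)$ lands exactly at $b$. Since $b$ is not a vertex, the endpoint of $e_a$ in that direction must be further along at some $(1-k,k)$ with $k \geq 2$, placing $b$ in the relative interior of $e_a$—so $a$ and $b$ share the common edge $e_a$. If instead $p' < q$, then $q \geq 2$ and the next lattice point on $e_a$ past $a$ is $(1-p',q) \in P$. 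Writing the $\ell_b$-inequality as $sx - r(y-1) \geq 0$, with $(r,s)$ the primitive direction of $e_b$ oriented so $r > 0$ (valid because $\ell_b$ is not vertical), and evaluating it at $(1-p',q)$ gives $s(1-p') \geq r(q-1)$. Evaluating the same inequality at $a \in P$ gives $|s| \leq r$, and combining this with $1-p' \leq 0$ and $q-1 \geq 1$ forces $s(1-p') < r(q-1)$ regardless of the sign of $s$, a contradiction.

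The crux is the final sub-case $1 \leq p' < q$: the geometry near $a$ alone is not enough, and one must bring the edge $e_b$ through the other special point $b$ into play. The key move is to derive the bound $|s| \leq r$ from $a \in P$ and then play it against the $\ell_b$-inequality at the lattice neighbor $(1-p',q)$ of $a$ on $e_a$ to extract the contradiction $p' \geq q$.
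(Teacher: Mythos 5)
Your proof is correct, but it proceeds quite differently from the paper's. The paper argues by contradiction from $(1,1)\notin P$: it asserts that, $P$ being a lattice polygon, the intersection of $P$ with the positive orthant must then coincide with the unit triangle $abz$, whence the supporting lines $L$ and $M$ through $a$ and $b$ coincide and $a,b$ lie on a common edge. That argument is very short but leaves the key geometric claim about $P\cap\RR_{\geq 0}^2$ essentially unjustified. You instead run a direct case analysis on the primitive direction $(p,q)$ of the edge $e_a$ (legitimately normalized with $q>0$, since a horizontal $\ell_a$ would contain the interior point $z$): for $p\geq 0$ you exhibit $(1,1)$ explicitly as a convex combination of $a$, $b$ and the lattice neighbor $a+(p,q)\in e_a$, which is a clean positive certificate; for $p<0$ the supporting inequality $qx+p'y\leq q$ evaluated at $b$ gives $p'\leq q$, the boundary case $p'=q=1$ yields the common-edge alternative (using that $b$ is not a vertex), and the remaining range $1\leq p'<q$ is killed by playing the supporting inequality of $e_b$ against the lattice point $(1-p',q)\in e_a$. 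Your version is longer but fully self-contained and makes explicit exactly where each hypothesis ($a,b$ not vertices, $z$ interior, unimodularity) enters; the paper's version buys brevity at the cost of a gap a careful reader must fill. One small imprecision: evaluating $sx-r(y-1)\geq 0$ at $a=(1,0)$ yields only $s\geq -r$, not $|s|\leq r$; this is harmless, since for $s\geq 0$ you already have $s(1-p')\leq 0< r(q-1)$ without any bound on $s$, and for $s<0$ the bound $-s\leq r$ is exactly what your final estimate uses.
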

\begin{proof}
  Up to unimodular equivalence we may assume $a=(1,0)$, $b=(0,1)$ and $z=(0,0)$.
  Let $L$ and $M$ be the lines spanned by the edges containing $a$ and $b$, respectively.
  Suppose that $(1,1)$ is not contained in $P$.
  Since $P$ is a lattice polygon, then the intersection of $P$ with the positive orthant agrees with the unit triangle $abz$.
  This entails that $L$ and $M$ agree, which means that $L=M$ defines an edge of $P$, and this contains $a$ as well as $b$.
\end{proof}

\begin{figure}[th]\centering
  \begin{tikzpicture}[scale=0.9]
    \fill[gray!40!white] (0.5,-0.5) rectangle (2.5,-1.5);
    \fill[gray!40!white] (-2.5,2.5) rectangle (-0.5,1.5);
    \fill[gray!40!white] (3.5,2.5) rectangle (5.5,1.5);
    \draw[] (0,0) -- (-0.5,0.5);
    \draw[] (0,0) -- (0.5,-0.5);
    \draw[] (-0.5,0.5) -- (3.5,0.5);
    \draw[] (-1.5,1.5) -- (-0.5,0.5);
    \draw[] (3.5,0.5) -- (4.5,1.5);
    \draw[] (0.5,-0.5) -- (2.5,-0.5);
    \draw[] (0.5,-0.5) -- (0.5,-1.5);
    \draw[] (2.5,-0.5) -- (2.5,-1.5);
    \draw[] (2.5,-1.5) -- (0.5,-1.5);
    \draw[] (2.5,-0.5) -- (3,0);
    \draw[] (3,0) -- (3.5,0.5);
    \draw[] (5.5,1.5) -- (3.5,1.5);
    \draw[] (5.5,2.5) -- (3.5,2.5);
    \draw[] (5.5,1.5) -- (5.5,2.5);
    \draw[] (3.5,1.5) -- (3.5,2.5);
    \draw[] (-2.5,1.5) -- (-0.5,1.5);
    \draw[] (-2.5,2.5) -- (-0.5,2.5);
    \draw[] (-2.5,1.5) -- (-2.5,2.5);
    \draw[] (-0.5,1.5) -- (-0.5,2.5);
    \fill[black] (-0.5,0.5) circle (.09cm) node[align=right, above]{};
    \fill[black] (-0.3,0.5) circle (.000001cm) node[align=right, above]{$v_{2}$};
    \fill[black] (3.5,0.5) circle (.09cm) node[align=right,   above]{$v_{1}\quad$};
    \fill[black] (1.6,-0.25) circle (.0001cm) node[align=left,   above]{$C\quad$};
    \fill[black] (-1.1,0.7) circle (.0001cm) node[align=left,   above]{$e_{2}\quad$};
    \fill[black] (4.5,0.6) circle (.0001cm) node[align=left,   above]{$e_{1}\quad$};
    \fill[black] (1.65,-1.25) circle (.0001cm) node[align=left,   above]{$G_{3}\quad$};
    \fill[black] (-1.3,1.7) circle (.0001cm) node[align=left,   above]{$G_{2}\quad$};
    \fill[black] (4.7,1.7) circle (.0001cm) node[align=left,   above]{$G_{1}\quad$};
  \end{tikzpicture}
  \caption{Graph with the heavy cycle $C$}
  \label{fig:heavy}
\end{figure}
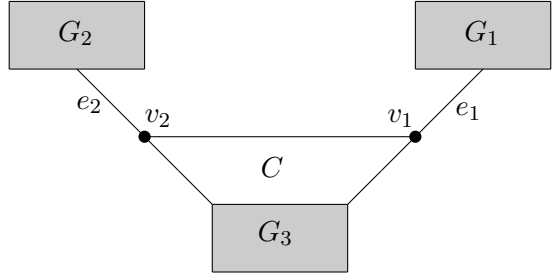

\begin{definition}\label{def:heavy}
  We say that the planar graph $G$ has a \emph{heavy cycle} if it looks like the graph in Figure~\ref{fig:heavy}.
  That is, $G$ has a cycle, $C$, with two vertices, $v_1$ and $v_2$, such that $v_i$ is incident with a cut edge $e_i$ connecting $v_i$ with subgraphs $G_i$ of positive genus;
  a third subgraph, $G_3$, also of positive genus, which shares at least one vertex with the cycle $C$.
\end{definition}
In particular, a graph with a heavy cycle has genus at least four.
From the classification of hyperelliptic graphs in \cite{M17}, we infer that a graph with a heavy cycle is not hyperelliptic.
Moreover, it follows from Lemma~\ref{lem:cut-split} that there are split lines, $S_1$ and $S_2$, dual to the edges $e_1$ and $e_2$ of $G$.
While the split lines may not be unique we just pick some.
By Lemma~\ref{lem:compatible} the splits $S_1$ and $S_2$ are compatible, and thus $P$ decomposes into a union of three lattice polygons $P_1$, $P_2$ and $P'$ such that $\Delta$ induces triangulations of all three.
In this way, we get triangulations $\Delta_1$, $\Delta_2$ and $\Delta'$ such that the cycle $G_i$ is the skeleton of $\Delta_i$ for $i=1,2$, and $G_3\cup C$ is the skeleton of $\Delta'$.
The triangles in $\Delta'$ which are dual to $v_1$ and $v_2$ are denoted $T_1$ and $T_2$, respectively.
We refer to the polygon $P'$ as the \emph{heavy component} of $P$, and likewise $\Delta'$ is the \emph{heavy component} of $\Delta$.
Expressed in the language of \cite{M19}, the heavy component $\Delta'$ arises from $\Delta$ via \enquote{bridge-reduction}.

The following lemma is the technical core of this paper.
Its proof is a bit cumbersome, with several cases to distinguish.
However, it is rather powerful as it delineates a fine border between trivalent planar graphs which are realizable as skeleta of tropically planar curves and those which are not.

\begin{lemma}\label{lem:heavy}
  Suppose that $G$ has a heavy cycle with cut edges $e_1$ and $e_2$ as in Figure~\ref{fig:heavy}.
  Then the triangles $T_1$ and $T_2$ in $\Delta$ share an edge $[z,w]$, where $z$ is the interior lattice point dual to $C$, and the split lines $S_1$ and $S_2$ intersect in $w$, which is a vertex of $P'$, and which lies in the boundary of $P$.
\end{lemma}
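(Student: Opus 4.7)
The plan is to combine Lemma~\ref{lem:cycle-vertex}, Lemma~\ref{lem:cut-split}, Lemma~\ref{lem:compatible}, and Lemma~\ref{lem:area} with the positive genus of the third subgraph~$G_3$ for a final contradiction.

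First, by iterating Lemma~\ref{lem:cycle-vertex} around the cycle~$C$, every triangle of $\Delta$ dual to a vertex of $C$ shares the interior lattice point $z$ dual to $C$; in particular, write $T_i = \conv\{z, a_i, b_i\}$ for $i=1,2$. Next I would identify the edge of $T_i$ opposite~$z$ with the split edge $s_i \subset S_i$ furnished by Lemma~\ref{lem:cut-split}: since the bridge $e_i$ is absent from the skeleton $G_3 \cup C$ of~$\Delta'$, any chain of degree-two triangles separating $T_i$ from the cut edge of $\Gamma'$ dual to $s_i$ would be pruned inwards by the skeleton construction and destroy the correspondence between $T_i$ and~$v_i$. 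Hence $[a_i, b_i] = s_i$, and Lemma~\ref{lem:cut-split} places $a_i, b_i$ on~$\partial P$.

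By Lemma~\ref{lem:compatible}, the split lines $S_1$ and $S_2$ do not meet in~$\interior P$, so they are either parallel or meet at a single point $w \notin \interior P$. In the non-parallel case I label $a_i$ to be the endpoint of $s_i$ closer to $w$, and apply Lemma~\ref{lem:area} to~$T_1, T_2$ with $L_i = S_i$. This yields that the open quadrangle $Q = \conv\{z, a_1, w, a_2\}$ contains no lattice point unless $a_1 = a_2 = w$. In the favourable subcase $a_1 = a_2 = w$, the triangles $T_1$ and $T_2$ share the edge $[z, w]$; the point $w = a_1$ lies on $\partial P$ by Lemma~\ref{lem:cut-split}; and $w$ lies on both split lines, making it a vertex of~$P'$ --- precisely the desired conclusion.

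The remaining work, and the main obstacle, is to rule out the case $(a_1, a_2) \neq (w, w)$ (the parallel case being analogous with $w$ at infinity). The geometric setup is favourable: by convexity of~$P$ and the fact that $P'$ lies on the $z$-side of each split line, the angular sector of $P$ at~$z$ that contains $w$ is contained in~$Q$. Moreover, the boundary $\partial Q$ carries no lattice points beyond its four vertices, because $[z, a_1]$ and $[z, a_2]$ are edges of unimodular triangles, while Lemma~\ref{lem:cut-split} forbids further lattice points on either split line. Hence any interior lattice point of $P$ in the $w$-sector at~$z$ would lie in $\interior Q$, which is empty by Lemma~\ref{lem:area}. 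The contradiction should then come from the hypothesis that $G_3$ has positive genus and attaches to $C$ at some vertex $u \neq v_1, v_2$: a case analysis on the angular position of the triangle $T_u$ (dual to $u$) in the fan at~$z$ should deliver a forbidden interior lattice point, either directly (if $T_u$ lies in the $w$-sector, so some interior lattice point of $G_3$ sits in $\interior Q$) or indirectly (if $T_u$ lies in the opposite sector, then non-adjacency of $T_1, T_2$ at~$z$ forces additional fan triangles on the $w$-side whose extra vertices on $\partial P$ contradict Lemma~\ref{lem:abz}).
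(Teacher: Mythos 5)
Your first half matches the paper's: Lemma~\ref{lem:cycle-vertex} gives the common vertex $z$, Lemmas~\ref{lem:cut-split} and~\ref{lem:compatible} place the split edges and force $w\notin\interior P$, Lemma~\ref{lem:area} empties the quadrangle $Q=\conv\{z,a_1,w,a_2\}$, and the subcase $a_1=a_2=w$ is exactly the desired conclusion. The gap is that the whole substance of the lemma lies in excluding $(a_1,a_2)\neq(w,w)$, and there your argument remains a sketch (\enquote{should deliver}, \enquote{should then come from}) whose proposed mechanism would not close it. Once $\interior Q$ is empty and $\partial Q$ carries no fifth lattice point, the only fan triangle at $z$ that can sit between $T_1$ and $T_2$ on the $w$-side is $\conv\{z,a_1,a_2\}$, with $[a_1,a_2]$ a boundary edge of $P$; applying Lemma~\ref{lem:abz} to it yields no contradiction, because that lemma's first alternative --- $a_1$ and $a_2$ lying on a common edge of $P$ --- is precisely the configuration that survives. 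That surviving configuration is where the paper's proof does its real work: an explicit coordinate case analysis over all unimodular positions of $p_2=(\gamma,\delta)$ (the cases $\gamma=0,-1,-2$ and $\gamma\leq-3$) and all admissible split edges $s_2$, with contradictions drawn from a third point of $V$ landing on a split line (against Lemma~\ref{lem:cut-split}), from $P_2$ being squeezed between two parallel lattice lines so that $G_2$ cannot have positive genus, from convexity of $P$ at $z$, and from the sign normalization of $w$. Note that it is the positive genus of $G_2$, not of $G_3$, that kills several of these subcases; your plan to extract the contradiction from $G_3$ and the position of $T_u$ is not what is needed, and your \enquote{direct} branch tacitly assumes that an interior lattice point supplied by $G_3$ lies in the angular sector at $z$ containing $w$, which is unjustified.

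A second concrete failure: the parallel case is not \enquote{analogous with $w$ at infinity}, since Lemma~\ref{lem:area} requires the two lines to meet and simply does not apply there. The paper handles it by a separate argument: the parallel splits pin $T_2=\conv\{z,(-1,0),(0,-1)\}$, the positive genus of $P'$ forces one of $(1,-1)$, $(-1,1)$ to be an interior lattice point, and then the segment joining it to the interior point $(1,1)$ of $P_1$ passes through the boundary point $(1,0)$, contradicting convexity. So both halves of the hard part of the proof are missing from your proposal, even though its overall architecture and choice of auxiliary lemmas are correct.
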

\begin{proof}
  By Lemma~\ref{lem:cycle-vertex} the triangles $T_1$ and $T_2$ share a vertex $z$, which is the interior lattice point in $P$ dual to the heavy cycle $C$.
  Up to a unimodular transformation we may assume that $z = (0,0)$, and $T_1=\conv\{z,(1,0),(0,1)\}$.

  Due to Lemma~\ref{lem:abz} the point $(1,1)=(1,0)+(0,1)-z$ lies in $P$.
  We want to show that $(1,1)$ is an interior lattice point of $P_1$ (and $P$).
  First, since $(1,0)$ is in the boundary and $z$ is in the interior of $P$, there are no interior lattice points on the ray $(1,0)+\pos\{(1,0)\}$.
  Similarly, there are no interior lattice points on $(0,1)+\pos\{(0,1)\}$.
  However, since $G_1$ has positive genus at least one interior lattice point of $P_1$ exists, and it must lie in the translated orthant $(1,1)+\pos\{(1,0),(0,1)\}$.
  As $(1,0)$ and $(0,1)$ lie in $P_1$ it follows from the convexity of $P_1$ that $(1,1)$ must be an interior lattice point.
  Now consider the triangle $T_2=\conv\{z,(\alpha,\beta),(\gamma,\delta)\}$ where $\alpha,\beta,\gamma,\delta\in\ZZ$ with
  \begin{equation}\label{eq:det}
    \alpha\delta-\beta\gamma \ = \ 1 \enspace .
  \end{equation}

  Suppose $T_1$ and $T_2$ do not share an edge.
  We are aiming for a contradiction, which then establishes a proof of this lemma.
  Since $(1,1)\in\interior P_1$ the horizontal line $(0,1)+\RR(1,0)$ intersects $P'$ only in $(0,1)$.
  Similarly, the vertical line  $(1,0)+\RR(0,1)$ intersects $P'$ only in $(1,0)$.
  As $(\alpha,\beta)$, $(\gamma,\delta)$, $(1,0)$, and $(0,1)$ are pairwise distinct this yields $\alpha,\beta,\gamma,\delta\leq0$.
  
  We distinguish two cases, depending on whether the two split lines are parallel or not.
  First, suppose the split line $S_2$ is parallel to $S_1$.
  As $T_2$ is a unimodular triangle $S_2$ is the line through $(-1,0)$ and $(0,-1)$.
  Then, $S_{2}$ and $\alpha,\beta,\gamma,\delta \leq 0$ forces $\alpha=\delta=-1$ and $\beta=\gamma=0$.
  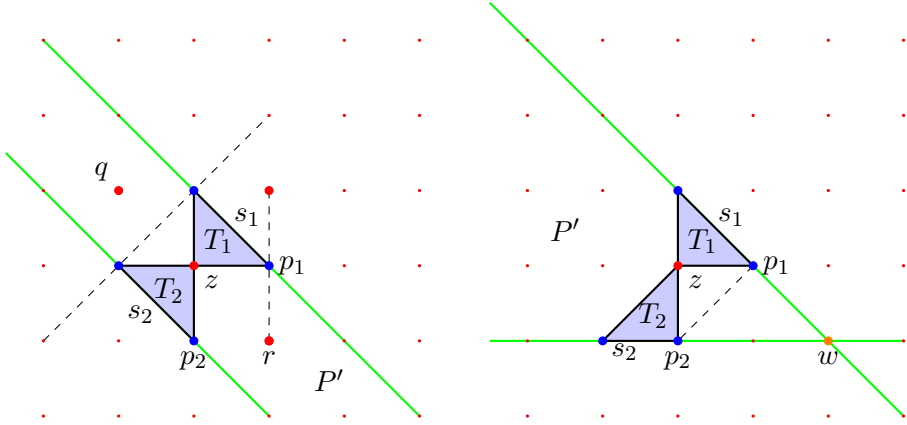
\begin{figure}\centering
    \begin{tikzpicture} 	
      \coordinate (z) at (0,0);
      \coordinate (p1) at (1,0);
      \coordinate (q1) at (0,1);
      \coordinate (p2) at (0,-1);
      \coordinate (q2) at (-1,0);
      \coordinate (u1) at (1,1);
      \coordinate (v1) at (1,-1);
      \coordinate (r) at (1,-1);
      
      \draw[inequality] (-2,3) -- (3,-2); 
      \draw[inequality] (-2.5,1.5) -- (1,-2); 
      \draw[someline] (1,1) -- (1,-1);
      \draw[someline] (-2,-1) -- (1,2);

      \filldraw[triangle] (z) -- (p1) -- (q1) -- cycle;
      \filldraw[triangle] (z) -- (p2) -- (q2) -- cycle;

      \node at ($.33*(z)+.33*(p1)+.33*(q1)$) {$T_1$};
      \node at ($.33*(z)+.33*(p2)+.33*(q2)$) {$T_2$};
          
      \filldraw[lattice] (u1) circle (1.5pt);
      \filldraw[lattice] (v1) circle (1.5pt);
      \filldraw[lattice] (z) circle (1.5pt) node[below right] {$z$};

      \foreach \x in {-2,...,3}{
        \foreach \y in {-2,...,3}{
          \filldraw[lattice] (\x,\y) circle (0.4pt);
        }
      }
      \filldraw[boundary] (p1) circle (1.5pt) node[right] {$p_1$};
      \filldraw[boundary] (q1) circle (1.5pt);
      \filldraw[boundary] (p2) circle (1.5pt) node[below] {$p_2$};
      \filldraw[boundary] (q2) circle (1.5pt);

      \filldraw[lattice] (-1,1) circle (1.5pt) node[above left]{$q$};
      \filldraw[lattice] (r) circle (1.5pt) node[below] {$r$};

      \node at ($.5*(p1)+.5*(q1)$) [above right=-4pt] {$s_{1}$};
      \node at ($.5*(p2)+.5*(q2)$) [below left=-4pt] {$s_{2}$};

      \node at (1.8,-1.8) [above]{$P'$};
    \end{tikzpicture}
    \qquad
    \begin{tikzpicture}
    \coordinate (z) at (0,0);
    \coordinate (p1) at (1,0);
    \coordinate (q1) at (0,1);
    \coordinate (p2) at (0,-1);
    \coordinate (q2) at (-1,-1);
    \coordinate (w) at (2,-1);

    \draw[inequality] (-2.5,3.5) -- (3,-2); 
    \draw[inequality] (-2.5,-1) -- (3,-1); 

    \filldraw[triangle] (z) -- (p1) -- (q1) -- cycle;
    \filldraw[triangle] (z) -- (p2) -- (q2) -- cycle;
    
    \node at ($.33*(z)+.33*(p1)+.33*(q1)$) {$T_1$};
    \node at ($.33*(z)+.33*(p2)+.33*(q2)$) {$T_2$};
    
    \filldraw[lattice] (z) circle (1.5pt) node[below right] {$z$};

    \draw[dashed] (1,0) -- (0,-1);
    
    \foreach \x in {-2,...,3}{
    	\foreach \y in {-2,...,3}{
    		\filldraw[lattice] (\x,\y) circle (0.4pt);
    	}
    }
    \filldraw[boundary] (p1) circle (1.5pt) node[right] {$p_1$};
    \filldraw[boundary] (q1) circle (1.5pt);
    \filldraw[boundary] (p2) circle (1.5pt) node[right,below] {$p_2$};
    \filldraw[boundary] (q2) circle (1.5pt);
    \filldraw[intersect] (w) circle (1.5pt) node[below] {$w$};
    
    \node at ($.5*(p1)+.5*(q1)$) [above right=-4pt] {$s_{1}$};
    \node at ($.5*(p2)+.5*(q2)$) [below left=-4pt] {$s_{2}$};
    
    \node at (-1.5,0.2) [above]{$P'$};
    \end{tikzpicture}
    \caption{Two possibilities for $S_{1}$ and $S_{2}$, which are ruled out a posteriori in the proof of Lemma~\ref{lem:heavy}. Left: $S_{1}$ and $S_{2}$ are parallel. Right: $S_{1}$ and $S_{2}$ intersect at a point.}
    \label{fig:splits-parallelintersect}
  \end{figure}
  We consider the lattice points $q=(-1,1)$ and $r=(1,-1)$.
  As $P'$ has positive genus, it follows from Lemma~\ref{lem:abz} that either $q$ or $r$ is an interior lattice point of $P'$.
  By symmetry we may assume $r\in\interior P'$.
  Then, the point $q$ can either lie on the boundary of $P$ or is not in $P$.
  Now the interval $[(1,1),(1,-1)]$ is the convex hull of two interior lattice points of $P$, but it contains the boundary point $p_1$ in its relative interior.
  Hence we obtain a contradiction.
  
  We conclude that the split lines $S_1$ and $S_2$ intersect at some point $w=(\psi,\omega)$, as depicted in the Figure \ref{fig:splits-parallelintersect}.
  Exploiting the symmetry along the line $x=y$, we may assume that $\omega \leq 0$.
  We use the labels from Figure~\ref{fig:splits-parallelintersect}. and intend to show that $w$ lies in $\partial P$.
  Suppose the contrary.
  Then, due to Lemma~\ref{lem:compatible}, the point $w$ must lie outside $P$.
  From our choice of $w$ we see that point $p_1:=(1,0)$ is closer to $w$ than $(0,1)$.
  From \eqref{eq:det} it follows that $p_2:=(\gamma,\delta)$ is closer to $w$ than $(\alpha,\beta)$.
  Applying Lemma~\ref{lem:area} we infer that the interior of the quadrangle $\conv\{z,p_1,w,p_2\}$ does not contain an interior lattice point of $P$.
  We realize that since the triangles $T_{1}$ and $T_{2}$ do not share an edge, and since $p_1,p_2\in\partial P$ therefore in this case $[p_1,p_2]$ is a boundary edge and the triangle $\conv\{z,p_{1},p_{2}\}$ belongs to $\Delta$.

  We now look at possible coordinates of the point $p_{2}=(\gamma ,\delta)$.
  Since the triangle $\conv\{z,p_1,p_2\}$ is unimodular and as $\delta\leq 0$ we have $\delta=-1$.
  Hence $p_{2}$ lies on the ray $\smallSetOf{(\gamma, \delta)}{\gamma \leq 0, \delta = -1}$.
  We consider some values of $\gamma$ starting with the case of $\gamma = 0$.
  Then \eqref{eq:det} implies that $(\alpha,\beta)$ is on the line $x=-1$, i.e., $\alpha = -1$.
  We explore the possible values for $\beta$.

  For $\beta=-2$ the split edge on the line $S_2$ would be $s_{2}=[(0,-1),(-1,-2)]$.
  This cannot be as then $S_2$ would contain $p_1$ as a third boundary point, a contradiction to Lemma~\ref{lem:cut-split}.
  For $\beta=-1$ and $s_{2}=[(0,-1),(-1,-1)]$, we realize that the polygon $P_{2}$ is bounded in the rectangular strip between the parallel lines $y=x$ and $y=x-1$.
  Yet there is no interior lattice point between them, and this contradicts $G_2$ to have positive genus.
  Any other value of $\beta < -2$ contradicts the convexity of $P$ at $z$. 
  This rules out the possibility $\gamma=0$.

  The arguments for excluding the other values of $\gamma$ are very similar.
  We summarize them briefly.
  For $\gamma = -1$ we obtain $\beta = \alpha + 1$.
  Then either $s_{2}=[(-1,-1),(-1,0)]$, and we obtain $\omega = 2  \geq 0$, which is absurd.
  Or $s_{2}=[(-1,-1),(-2,-1)]$, whence $P_2$ is squeezed between the lines $2y=x$ and $2y=x-1$, which does not leave space for an interior lattice point.
  Or $s_{2}=[(-1,-1),(-3,-2)]$, which then lies on the line spanned by the boundary edge $[p_{1},p_{2}]$. Again, any other value of $\beta$ on the line $y = x + 1$, contradicts the convexity of $P$ at $z$.

  Similarly, when $\gamma = -2$, we obtain $2\beta = \alpha+1$.
  Then either $s_{2}=[(-2,-1),(-1,0)]$, and we obtain $\omega = 1 \geq 0$, which is absurd.
  Or $s_{2}=[(-2,-1),(-3,-1)]$, whence $P_2$ is squeezed between the lines $3y=x$ and $3y=x-1$, which does not leave space for an interior lattice point.
  Or $s_{2}=[(-2,-1),(-5,-2)]$, which then lies on the line spanned by the boundary edge $[p_{1},p_{2}]$.
  Again, any other value of $\beta$ on the line $2y = x + 1$, contradicts the convexity of $P$ at $z$.

  The case $\gamma\leq -3$ is left.
  Then the point $(\alpha,\beta)$ lies on the line $-\gamma y = x + 1$.
  The following three possibilities occur for the split edge $s_{2}$.
  Either $s_{2} = [(-1,0),(\gamma,-1)]$, and we obtain $\omega \geq 0$, forcing $\omega=0$.
  In this case we see that there is no edge to join between the points $(-1,0)$ and $(0,1)$ such that the polygon $P$ remains convex, which gives us a contradiction.
  Or $s_{2} = [(\gamma-1,-1)(\gamma,-1)]$, whence $P_2$ is squeezed between the lines $(1-\gamma)y=x$ and $(1-\gamma)y=x-1$, which does not leave space for an interior lattice point.
  Or $s_{2}=[(2\gamma-1,-2),(\gamma,-1)]$, which then lies on the line spanned by the boundary edge $[p_{1},p_{2}]$.
  Any other value of $\beta$ on the line $-\gamma y = x + 1$, contradicts the convexity of $P$ at $z$.
  
  Therefore, finally, we conclude that there is no lattice point $p_{2} = (\gamma, \delta)$ such that the triangle $\conv\{z,p_{1},p_{2}\}$ belongs to $\Delta$.
  Hence, our initial assumption was wrong, and the triangles $T_{1}$ and $T_{2}$ do share the edge $[z,w]$, where $w=p_1=p_2$ is the intersection of $S_1$ and $S_2$.
  Consequently, $s_1$ and $s_2$ are two distinct edges of $P'$, and so $w$ is a vertex of $P'$.
\end{proof}

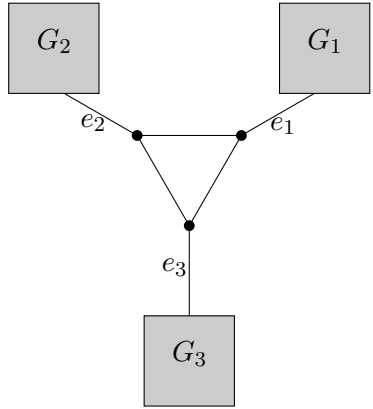
\begin{figure}\centering
	\begin{tikzpicture}[scale=0.8]
	\draw[] (30:1) -- (150:1) -- (270:1) -- cycle;
	
	\draw[] (30:1) -- (30:3);
	\draw[] (150:1) -- (150:3);
	\draw[] (270:1) -- (270:3);
	
	\draw[fill = gray!40!white, draw=black] (1.5,1.2) rectangle ++(1.5,1.5);
	\draw[fill = gray!40!white, draw=black] (-3,1.2) rectangle ++(1.5,1.5);
	\draw[fill = gray!40!white, draw=black] (-0.75,-4) rectangle ++(1.5,1.5);
	
	\fill[black] (30:1) circle (.09cm) node[align=left,   above]{};
	\fill[black] (150:1) circle (.09cm) node[align=left,   above]{};
	\fill[black] (270:1) circle (.09cm) node[align=left,   above]{};
	
	\fill[black] (-1.35,0.4) circle (.0001cm) node[align=left,   above]{$e_{2}\quad$};
	\fill[black] (0,-2) circle (.0001cm) node[align=left,   above]{$e_{3}\quad$};
	\fill[black] (1.8,0.35) circle (.0001cm) node[align=left,   above]{$e_{1}\quad$};
	\fill[black] (0.25,-3.5) circle (.0001cm) node[align=left,   above]{$G_{3}\quad$};
	\fill[black] (-2,1.7) circle (.0001cm) node[align=left,   above]{$G_{2}\quad$};
	\fill[black] (2.5,1.7) circle (.0001cm) node[align=left,   above]{$G_{1}\quad$};
	\end{tikzpicture}
	\caption{A graph with a sprawling triangle}
	\label{fig:sprawling}
\end{figure}

\begin{definition}\label{def:sprawling}
  We say that the planar graph $G$ has a \emph{sprawling triangle} if it is of the form in Figure \ref{fig:sprawling},	
  where the edges $e_{1}$, $e_{2}$ and $e_{3}$ represent cut edges and the shaded regions represent subgraphs with positive genus.
\end{definition}

Note that a graph with a sprawling triangle is a special case of a graph with a heavy cycle; cf.\ Definition~\ref{def:heavy}.
An example arises from the anti-honeycomb triangulation of genus~4 in Example~\ref{exmp:anti-honey-2} and Figure~\ref{fig:antihoney}, and this is characterized by the following.

\begin{theorem}\label{thm:sprawling}
  If $G$ has a sprawling triangle then $g=4$, and, up to unimodular equivalence, we have
  \[
    P \ = \ A_{(-2,0;-2,0,-2,0)} \quad  \text{and} \quad \Delta \ = \ \Delta_{(-2,0;-2,0,-2,0)} \enspace .
  \]
\end{theorem}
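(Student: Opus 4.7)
My plan is to bootstrap from Lemma~\ref{lem:heavy} by applying it three times. A graph with a sprawling triangle is a graph with a heavy cycle in three different ways: for each pair $(e_i, e_j)$ of cut edges, the remaining cut edge $e_k$ together with its attached subgraph $G_k$ plays the role of the ``third subgraph'' in Definition~\ref{def:heavy}. Each application of Lemma~\ref{lem:heavy} yields a point $w_{ij}$ on $\partial P$ with the property that the triangle $T_i$ of $\Delta$ dual to $v_i$ and the triangle $T_j$ dual to $v_j$ share the edge $[z, w_{ij}]$, where $z$ is the interior lattice point dual to $C$.

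Because every pair $(T_i, T_j)$ shares an edge through $z$, the three triangles $T_1, T_2, T_3$ form the complete fan at $z$ in $\Delta$. Hence $T_i = \conv\{z, w_{ij}, w_{ik}\}$ for $\{i, j, k\} = \{1, 2, 3\}$, each split edge is $s_i = [w_{ij}, w_{ik}]$, and the heavy component collapses to $P' = \conv\{w_{12}, w_{13}, w_{23}\}$, a lattice triangle whose only lattice points are $z$ (interior) and the three $w_{ij}$. Such a minimal lattice triangle is unique up to unimodular equivalence, so I normalize $z = (0, 0)$, $w_{12} = (0, 1)$, $w_{13} = (1, 0)$, $w_{23} = (-1, -1)$. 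Applying Lemma~\ref{lem:abz} to each $T_i$ (the first alternative is excluded because $s_i$ cannot lie on $\partial P$ by Lemma~\ref{lem:cut-split}) yields the lattice points $u_1 = (1, 1)$, $u_2 = (-1, 0)$, $u_3 = (0, -1)$ in $P$. Each $u_i$ lies in the interior of $P$: otherwise $P_i$ would be contained in the unimodular triangle $\conv\{w_{ij}, w_{ik}, u_i\}$, giving $g(G_i) = 0$ in contradiction with Definition~\ref{def:sprawling}.

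The main obstacle is the final step: pinning down $P$ exactly, and in particular ruling out higher-genus configurations for the $G_i$. My plan is to show that in $\Delta$ the triangle $T_i^*$ on the $P_i$ side of $s_i$ has $u_i$ as its third vertex, and that $u_i$ has exactly three triangles of $\Delta$ incident to it. For the first, any other candidate third vertex of $T_i^*$ (a lattice point of $P$ at unit lattice distance from $s_i$ on the $P_i$ side other than $u_i$) is ruled out by an angular and unimodularity analysis at $w_{ij}$, similar in spirit to the case analysis in the proof of Lemma~\ref{lem:heavy}. For the second, any fourth neighbor of $u_i$ would force an additional boundary vertex of $P$ whose placement contradicts the split conditions on $S_j, S_k$ which keep $u_j$ and $u_k$ in the interior. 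Once $u_i$ is trivalent with neighbors $w_{ij}, w_{ik}$, and a third neighbor $V_i$, the standard identity that the three spoke vectors at a trivalent interior lattice point of a unimodular triangulation sum to zero gives $V_i = 2 u_i - z$; explicitly $V_1 = (2, 2)$, $V_2 = (-2, 0)$, $V_3 = (0, -2)$. Then $P_i = \conv\{w_{ij}, w_{ik}, V_i\}$ has $u_i$ as its unique interior lattice point, and taking convex hulls gives $P = \conv\{V_1, V_2, V_3\} = A_{(-2,0;-2,0;-2,0)}$. The triangulation $\Delta$ is now forced to be $\Delta_{(-2,0;-2,0;-2,0)}$ of Example~\ref{exmp:anti-honey-2}, and $g = \#\{z, u_1, u_2, u_3\} = 4$.
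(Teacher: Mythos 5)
Your opening moves coincide with the paper's: apply Lemma~\ref{lem:heavy} to each pair of cut edges, conclude that the three triangles around $z$ pairwise share edges and hence that the link of $z$ is a lattice triangle of normalized area three, normalize it to $\conv\{(1,0),(0,1),(-1,-1)\}$, and use Lemma~\ref{lem:abz} to produce the three further interior lattice points $(1,1)$, $(-1,0)$, $(0,-1)$. Up to there the proposal is sound (your exclusion of the first alternative in Lemma~\ref{lem:abz} via Lemma~\ref{lem:cut-split} is fine, and matches the paper's observation that $a,b,c$ lie on pairwise distinct edges of $P$).

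The final step, however, has a genuine gap, and it is not merely a matter of missing details. Even if you could establish your two sub-claims --- that the triangle of $\Delta$ on the far side of $s_1$ has apex $u_1=(1,1)$ and that $u_1$ has exactly three incident triangles with third neighbor $V_1=(2,2)$ --- this would \emph{not} imply $P_1=\conv\{(1,0),(0,1),(2,2)\}$. The link of $u_1$ in $\Delta$ is local data and does not bound the extent of $P_1$: for instance $P_1=\conv\{(1,0),(0,1),(3,3)\}$ has a unimodular triangulation in which $(1,1)$ is trivalent with link exactly $\{(1,0),(0,1),(2,2)\}$, yet $g(P_1)=2$ and $(2,2)$ is an interior point rather than a vertex. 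Nothing in Definition~\ref{def:sprawling} forbids $G_1$ from having genus $2$, so this configuration must be excluded by a \emph{global} argument about $P$, not by inspecting the star of $u_1$. (Your two sub-claims are themselves only asserted, with justifications too vague to check, but the point above shows the strategy fails even granting them.) The paper closes exactly this hole by convexity: it shows $(1,2)\notin P$ (else the edge $M$ through $(0,1)$ would contain the interior point $(-1,0)$) and likewise $(2,1)\notin P$, so every vertex of $P$ beyond the split line $x+y=1$ lies on the line $y=x$; the symmetric statements put the remaining vertices on the rays through $(-1,0)$ and $(0,-1)$. Since the single edge $L$ of $P$ through $(1,0)$ must join the vertex $(k_1,k_1)$ to the vertex $(0,-k_3)$, collinearity forces $1/k_1+1/k_3=1$, i.e.\ $k_1=k_3=2$, and similarly for the other pairs, which pins down $P=\conv\{(2,2),(-2,0),(0,-2)\}$. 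Some argument of this interactive, whole-polygon nature is indispensable; you would need to add it to make your proof complete.
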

\begin{proof}
  We use the notation from Figure~\ref{fig:sprawling}.
  Let $z$ be the central interior lattice point corresponding to the sprawling triangle. Let $s_{1},s_{2}$ and $s_{3}$ be the splits corresponding to the cut edges $e_{1}, e_{2}$ and $e_{3}$.
  Using Lemma \ref{lem:heavy} on these three splits, we obtain that each pair of them intersects at a point on $\partial P$.
  Therefore, the dual of the point $z$ in $\Delta$ is a triangle, too, whose vertices we call $a,b,c$.
  It follows that the lattice triangle $abc$ has normalized area three, and its unimodular triangulation into $abz$, $acz$, and $bcz$ forms an induced subcomplex of $\Delta$.

  As in the proof of Lemma~\ref{lem:abz} we may assume that $a=(1,0)$, $b=(0,1)$ and $z=(0,0)$.
  It then follows that $c=(-1,-1)$.
  Directly from Definition~\ref{def:sprawling} it follows that the line $ab$ is dual to a cut edge in $G$ and thus induces a split of~$\Delta$.
  All the interior lattice points corresponding to a region of the subgraph $G_1$ must lie in those halfspace defined by $ab$ which does not contain~$z$.
  Since $z$ is the only interior lattice point of $P$ which does not correspond to a region of $G_1\cup G_2\cup G_3$, the lattice points $a$, $b$ and $c$ must lie in the boundary of~$P$.

  Now Definition~\ref{def:sprawling} requires the subgraphs $G_1$, $G_2$ and $G_3$ to have positive genus.
  This excludes the possibility that one of the three points $a,b,c$ is a vertex of $P$.
  Thus they must lie on pairwise distinct edges of $P$.
  Now we can apply Lemma~\ref{lem:abz} three times to learn that the three lattice points
  \[
    \begin{aligned}
      (1,0)+(0,1)-(0,0) \ &= \ (1,1) \\ (1,0)+(-1,-1)-(0,0) \ &= \ (0,-1) \\ (0,1)+(-1,-1)-(0,0) \ &= \ (-1,0)
    \end{aligned}
  \]
  are contained in $P$.

  Let $L, M, N$ be the three lines spanned by the edges of $P$ through $a,b,c$, respectively.
  Suppose that $(1,1)$ is a boundary point of $P$.
  Then, since $P$ is lattice polygon, $(1,1)$ is the intersection point of $L$ and $M$ and thus a vertex of $P$.
  This contradicts $G_1$ to have positive genus, and it follows that $(1,1)$ is an interior lattice point of $P$.
  By symmetry, also $(0,-1)$ and $(-1,0)$ are interior lattice points of $P$.

  Now the lattice point $(1,2)$ is not contained in $P$ because it would then need to lie on the line $M$, together with $(-1,0)$.
  But this cannot happen as $(-1,0)$ was already identified as an interior point of $P$.
  Similarly, $(2,1)$ is not contained in $P$ either.
  Yet this implies that the intersection of the lines $L$ and $M$ lies strictly between the two parallel line $(1,0)+\lambda(1,1)$ and $(0,1)+\lambda(1,1)$.
  Thus there is at least one vertex of $P$, which must be a lattice point, on the line $(0,0)+\lambda(1,1)$.
  This forces that $(2,2)$ lies in $P$.

  Again the situation is symmetric, which is why also $(-2,0)$ and $(0,-2)$ lie in $P$.
  The single choice left is that $(2,2)$ as well as $(-2,0)$ and $(0,-2)$ are vertices of $P$, and $L,M,N$ are the only facets.
  This establishes $P = \conv \{(-2,0),(0,-2),(2,2)\}$.
  The claim about $\Delta$ follows as there is only one way to extend the triangulation from the triangle $abc$ to all of $P$.
\end{proof}

\begin{remark}
  Contracting a sprawling triangle in a planar graph yields a graph with a sprawling node, and this cannot be tropically planar.
  Therefore, Example~\ref{exmp:anti-honey-2} shows that the class of tropically planar graphs is not minor closed.
\end{remark}

\section{Graph with a heavy cycle and two loops}

Graphs with a sprawling triangle form special cases of graphs with a heavy cycle.
Thus our main technical result, Lemma~\ref{lem:heavy}, allowed us to derive decisive structural constraints for triangulations whose skeleton has a sprawling triangle in Theorem~\ref{thm:sprawling}.
Now we are looking into another special class of groups with a heavy cycle, aiming for a second structural result on unimodular triangulations of lattice polygons.

\begin{definition}
  We say that a connected trivalent planar graph $G$ has a \emph{heavy cycle with two loops} if it has the form as described in Figure \ref{fig:heavy-two-loops}, where the shaded region represents a subgraph of positive genus.	
\end{definition} 
 
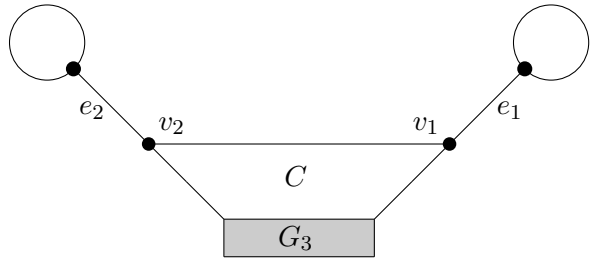
\begin{figure}[ht]\centering
  \begin{tikzpicture}
    \coordinate (v2) at (-0.5,0.5);
	\coordinate (v1) at (3.5,0.5);
	\coordinate (u2) at (-1.5,1.5);
	\coordinate (u1) at (4.5,1.5);
	
	\fill[gray!40!white] (0.5,-0.5) rectangle (2.5,-1);
	\draw[] (0,0) -- (-0.5,0.5);
	\draw[] (-0.5,0.5) -- (3.5,0.5);
	\draw[] (0,0) -- (0.5,-0.5);
	\draw[] (-1.5,1.5) -- (-0.5,0.5);
	\draw[] (3.5,0.5) -- (4.5,1.5);
	\draw[] (-1.85,1.85) circle (0.5cm);
	\draw[] (0.5,-0.5) -- (2.5,-0.5);
	\draw[] (0.5,-0.5) -- (0.5,-1);
	\draw[] (2.5,-0.5) -- (2.5,-1);
	\draw[] (2.5,-1) -- (0.5,-1);
	\draw[] (2.5,-0.5) -- (3,0);
	\draw[] (3,0) -- (3.5,0.5);
	\draw[] (4.85,1.85) circle (0.5cm);
	
    \node at ($(v2)$) [above right=-0.1pt] {$v_{2}$};
	\node at ($(v1)$) [above left=-0.1pt] {$v_{1}$};
	
	\fill[black] (-0.5,0.5) circle (.09cm);
	\fill[black] (3.5,0.5) circle (.09cm);
	\fill[black] (-1.25,0.7) circle (.0001cm) node[align=left,   above]{$e_{2}$};
	\fill[black] (4.3,0.7) circle (.0001cm) node[align=left,   above]{$e_{1}$};
	\fill[black] (4.5,1.5) circle (.1cm) node[align=left,   above]{};
	\fill[black] (-1.5,1.5) circle (.1cm) node[align=left,   above]{};
	\fill[black] (1.65,-0.2) circle (.0001cm) node[align=left,   above]{$C\quad$};
	\fill[black] (1.65,-1.05) circle (.0001cm) node[align=left,   above]{$G_{3}\quad$};
	\end{tikzpicture}
	\caption{Heavy cycle with two loops}
	\label{fig:heavy-two-loops}
\end{figure}

This type of skeleton does actually occur.
\begin{example}\label{exmp:heavy-two-loops}
  The quadrangle $Q_4^{(5)}$ of genus 5, cf.\ \cite[Figure~22]{JB15}, admits a (regular) unimodular triangulation whose skeleton features a heavy cycle with two loops, cf.\ \cite[Figure~23]{JB15}.
\end{example}

Our aim in this section is to establish the following,

\begin{theorem}\label{thm:heavy-two-loops}
  Suppose $G$ is a graph with a heavy cycle $C$ and two loops with cut edges, $e_{1}$ and $e_{2}$, as in Figure \ref{fig:heavy-two-loops}.
  Then the heavy component $P'$ can have at most three interior lattice points, and these lie on the line spanned by the edge $[z,w] \in \Delta$, where $z$ is the interior lattice point dual to $C$, and $w$ is the intersection point of the split edges $s_{1}$ and $s_{2}$.
  In particular, $P'$ is hyperelliptic and $g\leq 5$.
\end{theorem}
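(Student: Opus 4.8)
The plan is to reduce everything to the three pieces into which the two splits cut $P$, and then to cash in on the fact that the two outer pieces are as small as possible because $G_1$ and $G_2$ are loops. First I would fix coordinates exactly as in the conclusion of Lemma~\ref{lem:heavy}: set $z=(0,0)$ and $w=(1,0)$, so that the shared edge $[z,w]$ of $T_1$ and $T_2$ lies on the $x$-axis, with $T_1=\conv\{(0,0),(1,0),(0,1)\}$ and split line $S_1\colon x+y=1$, and $T_2=\conv\{(0,0),(1,0),(\gamma,-1)\}$ with split line $S_2$ through $(1,0)$ and $(\gamma,-1)$ meeting $S_1$ in $w$. Since $e_1,e_2$ are cut edges, Lemma~\ref{lem:compatible} makes $S_1,S_2$ compatible, so $P$ decomposes into $P_1,P_2,P'$ with pairwise disjoint interiors, where $P_i$ carries the loop $G_i$ and $P'$ is the heavy component. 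By Lemma~\ref{lem:cut-split} each split line contains exactly two lattice points of $V$, both on $\partial P$; hence no interior lattice point of $P$ lies on $S_1\cup S_2$, and the genus splits additively as $g=g(P_1)+g(P_2)+g(P')$.

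The decisive input is that $G_1,G_2$ are loops, i.e.\ $g(P_1)=g(P_2)=1$. It therefore suffices to prove $g(P')\le 3$ together with collinearity of the interior lattice points of $P'$ on the line through $z$ and $w$; hyperellipticity and $g=2+g(P')\le 5$ then follow. The proof of Lemma~\ref{lem:heavy} already identifies $(1,1)$ as the unique interior lattice point of $P_1$, and the same reasoning applied to $T_2$ produces the unique interior lattice point of $P_2$, which sits below the $x$-axis roughly opposite $(1,1)$. I would then feed these two single interior points, the ``exactly two lattice points per split line'' constraint, and convexity of $P$ at $w$ into a case analysis on the slope datum $\gamma$ of $S_2$, in the same spirit as the case distinction in Lemma~\ref{lem:heavy}. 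In particular, since $z$ is interior while $(1,1)\in\interior P_1$ and the interior point of $P_2$ lie strictly inside $P$ on opposite sides of $w$, convexity at $w$ rules out the antipodal slope (the value $\gamma=0$) and pins $S_2$ down to a short list of admissible lines.

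With the two splits determined, I would show that every interior lattice point of $P'$ lies on the $x$-axis. Each such point lies on the $z$-side of both $S_1$ and $S_2$, so for integer points it satisfies $x+y\le 0$ and $x+(\gamma-1)y\le 0$; these two half-planes form a cone with apex $z=(0,0)$. The genus-one conditions force this cone, together with the far boundary of $P'$ running through $G_3$, to be so narrow that the only lattice points of it that actually lie in $\interior P'$ are the points $(0,0),(-1,0),(-2,0),\dots$ on the line $[z,w]$. Convexity then bounds the leftward extent: $w=(1,0)$ is the right endpoint of $\interior P'\cap\{y=0\}$, and the two genus-one loops cap the left endpoint at $(-2,0)$, leaving at most three interior points. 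This gives $g(P')\le 3$, hence $P'$ is hyperelliptic and $g\le 5$.

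The hard part will be the collinearity step, exactly as in Lemma~\ref{lem:heavy}: one must exclude interior lattice points of $P'$ lying off the $x$-axis. The cone between the two splits is genuinely two-dimensional and does contain off-axis lattice points, so narrowness alone is insufficient; I expect to combine it with convexity of $P$, the fixed positions of the single interior points of $P_1$ and $P_2$, and the two-lattice-point property of the split lines in a somewhat delicate case analysis. A secondary nuisance is deciding whether $w$ is a vertex of $P$ or lies in the relative interior of an edge; this has to be settled (the convexity argument at $w$ essentially forces $w$ to be a vertex once the antipodal slope is excluded) before the bound on $P'$ can be made clean.
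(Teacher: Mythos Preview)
Your overall plan—invoke Lemma~\ref{lem:heavy} to get the shared edge $[z,w]$, normalize coordinates, pin down the unique interior points $r\in P_1$ and $r'\in P_2$ coming from the two loops, then prove collinearity of $\interior P'\cap\ZZ^2$ and finally bound $g(P')\le 3$—is exactly the paper's arc.  Where you diverge is the mechanism for the collinearity step, and there you are making life harder than necessary.

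You propose to work inside the two-dimensional cone cut out by the split inequalities and then run a case analysis on the slope parameter $\gamma$ of $S_2$, in analogy with the proof of Lemma~\ref{lem:heavy}.  As you yourself note, that cone genuinely contains off-axis lattice points, so the cone alone cannot do the job, and the promised ``delicate case analysis'' is left unexecuted.  The paper avoids all of this with a single clean idea: it sandwiches $P'$ between two lines \emph{parallel} to $[z,w]$ at lattice distance one.  Concretely (in the paper's normalization $w=(0,1)$, $p_1=(1,0)$, $p_2=(-1,-k)$), the ray from the interior point $r=(1,1)$ through the boundary point $p_1$ lies on $x=1$ and, by convexity, exits $P$ at $p_1$; the analogous ray from $r'=(-1,-k{+}1)$ through $p_2$ lies on $x=-1$.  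Together with the rays through $w$ this forces $P'\subset\{-1\le x\le 1\}$, so every interior lattice point of $P'$ has $x=0$, i.e.\ sits on the line through $z$ and $w$.  No case split on $\gamma$ (equivalently $k$) is needed, and your side worry about whether $w$ is a vertex of $P$ never arises.

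For the bound $g(P')\le 3$ your sketch (``the two genus-one loops cap the left endpoint at $(-2,0)$'') is too vague to be a proof.  The paper argues by contradiction: if a fourth collinear interior point existed (say $(0,-3)$ in its coordinates), then the ray from $(0,-3)$ through $p_1$ together with the uniqueness of $r$ in $P_1$ forces $P_1$ into a tiny triangle that contains no admissible lattice vertex, contradicting that $P_1$ is a genuine lattice polygon of genus one.  You should replace your hand-wave with an argument of this type.
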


\begin{proof}
  It follows from Lemma~\ref{lem:heavy} that the two triangles share the edge $[z,w]$, where $w\in\partial P$ is the point where the two split edges meet.
  We will first show that the interior lattice points of $P'$ lie on the line spanned by $[z,w]$.
	
  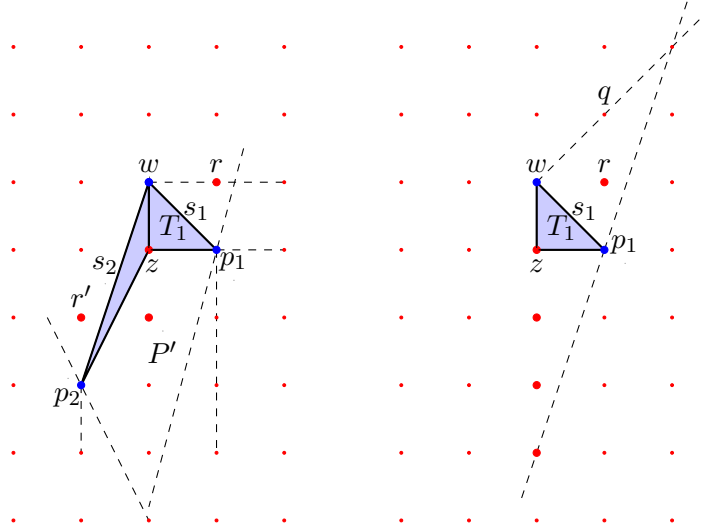
\begin{figure}\centering
    \begin{tikzpicture}[scale=0.9]
      \coordinate (z) at (0,0);
      \coordinate (p1) at (1,0);
      \coordinate (q1) at (0,1);
      \coordinate (p2) at (0,1);
      \coordinate (q2) at (-1,-2);
      \coordinate (u1) at (1,1);
      \coordinate (v1) at (-1,-1);
      \coordinate (k1) at (0,-1);
      
      \filldraw[triangle] (z) -- (p1) -- (q1) -- cycle;
      \filldraw[triangle] (z) -- (p2) -- (q2) -- cycle;
      
      \node[above right] at (z) {$T_1$};
      \filldraw[lattice] (z) circle (1.5pt);
      \draw[] (0,0) -- (0,1);
      \draw[] (0,0) -- (1,0);
      \draw[] (0,1) -- (1,0);
      \draw[] (0,1) -- (-1,-2);
      \draw[] (0,0) -- (-1,-2);
      \draw[dashed] (0,1) -- (2,1);
      \draw[dashed] (1,0) -- (2,0);
      \draw[dashed] (-1,-2) -- (-1,-3);
      \draw[dashed] (1,0) -- (1,-3);
      \draw[dashed] (1.4,1.5) -- (-0,-3.8);
      \draw[dashed] (-1.5,-1) -- (0,-4);
      \fill[black] (0.05,0) circle (.000005cm) node[align=right,   below]{$z$};
      \fill[black] (0,1) circle (.000005cm) node[align=right,   above]{$w$};
      \fill[black] (1,1) circle (.000005cm) node[align=right,   above]{$r$};
      \fill[black] (0.7,0.3) circle (.000005cm) node[align=right,   above]{$s_{1}$};
      \fill[black] (-0.65,-0.5) circle (.000005cm) node[align=right,   above]{$s_{2}$};
      \fill[black] (-1,-1) circle (.000005cm) node[align=right,   above]{$r'$};
      \fill[black] (0.2,-1.2) circle (.000005cm) node[align=right,   below]{$P'$};
      \fill[black] (-1.2,-1.9) circle (.000005cm) node[align=left,   below]{$p_2$};
      \fill[black] (1.25,-0.45) circle (.0000005cm) node[align=left,   above]{$p_1$};
      \foreach \x in {-2,...,2}{
        \foreach \y in {-4,...,3}{
          \filldraw[lattice] (\x,\y) circle (0.6pt);
        }
      }
      \filldraw[boundary] (p1) circle (1.5pt);
      \filldraw[boundary] (q1) circle (1.5pt);
      \filldraw[boundary] (p2) circle (1.5pt);
      \filldraw[boundary] (q2) circle (1.5pt);
      \filldraw[lattice] (u1) circle (1.5pt);
      \filldraw[lattice] (v1) circle (1.5pt);
      \filldraw[lattice] (k1) circle (1.5pt);
    \end{tikzpicture}
    \hspace{1.25cm} 
    \begin{tikzpicture}[scale=0.9]
    \coordinate (z) at (0,0);
    \coordinate (p1) at (1,0);
    \coordinate (q1) at (0,1);
    \coordinate (p2) at (-1,-5);
    \coordinate (q2) at (0,1);
    \coordinate (u1) at (1,1);
    \coordinate (v1) at (-1,-4);
    \coordinate (k1) at (0,-1);
    \coordinate (k2) at (0,-2);
    \coordinate (k3) at (0,-3);
    
    \filldraw[triangle] (z) -- (p1) -- (q1) -- cycle;
    
    \node[above right] at (z) {$T_1$};
    \filldraw[lattice] (z) circle (1.5pt);
    \draw[] (0,0) -- (0,1);
    \draw[] (0,0) -- (1,0);
    \draw[] (0,1) -- (1,0);
    \draw[dashed] (1,0) -- (-0.25,-3.75);
    \draw[dashed] (1,0) -- (2.25,3.75);
    \draw[dashed] (0,1) -- (2.5,3.5);
    \fill[black] (0,0) circle (.000005cm) node[align=right,   below]{$z$};
    \fill[black] (0,1) circle (.000005cm) node[align=right,   above]{$w$};
    \fill[black] (1,1) circle (.000005cm) node[align=right,   above]{$r$};
    \fill[black] (1,2) circle (.000005cm) node[align=right,   above]{$q$};
    \fill[black] (0.7,0.3) circle (.000005cm) node[align=right,   above]{$s_{1}$};
    \fill[black] (1.3,-0.2) circle (.0000005cm) node[align=left,   above]{$p_1$};
    \foreach \x in {-2,...,2}{
    	\foreach \y in {-4,...,3}{
    		\filldraw[lattice] (\x,\y) circle (0.6pt);
    	}
    }
    \filldraw[boundary] (p1) circle (1.5pt);
    \filldraw[boundary] (q1) circle (1.5pt);
    \filldraw[lattice] (u1) circle (1.5pt);
    \filldraw[lattice] (k1) circle (1.5pt);
    \filldraw[lattice] (k2) circle (1.5pt);
    \filldraw[lattice] (k3) circle (1.5pt);
    \end{tikzpicture}
    \caption{This illustrates Theorem \ref{thm:heavy-two-loops}: general sketch (left) and the case when $g(P') \geq 4$ (right), which is impossible}
    \label{fig:cycle_two_loops}
  \end{figure}
  
  As previously, we fix $T_{1} = \conv\{(0,0),(0,1),(1,0)\}$ and use the labels from Figure \ref{fig:cycle_two_loops}; in particular, we may assume that $w=(0,1)$.
  Using Lemma~\ref{lem:heavy} and the unimodularity of $T_{2}$, we realize that $p_{2}$ must lie on the line $x=-1$; so let $p_{2}=(-1,-k)$ for some integer $k$.
  We use Lemma \ref{lem:abz} on the points $z=(0,0)$, $w=(0,1)$ and $p_{1}=(1,0)$ and infer that the point $r:=(1,1)$ is an interior lattice point of $P$; moreover it is the unique interior lattice point of $P_{1}$.
  Similarly, we infer that the point $r':=(-1,-k+1)$ is the unique interior lattice point of $P_{2}$.
  By considering the lines connecting the interior point $r$ with the boundary points $p_1$ and $w$, respectively, we see that $k\geq 0$.
  The same argument shows that the entire polygon $P'$ is squeezed between the lines $x=1$ and $x=-1$.
  In particular, the interior lattice points of $P'$ lie on $x=0$, which is the line spanned by $z$ and $w$.

  Next we will show that $g(P')\leq 3$.
  We assume the contrary, i.e., $g(P')\geq 4$
  Then, since all interior lattice points of $P'$ lie on the line $x=0$, the point $(0,-3)$ must be an interior lattice point of $P$.

  The vertical line $x=1$ contains the point $p_1$, which is a boundary point, and $r$, which is an interior lattice point.
  It follows that there is a lattice point $(1,\lambda)$ in the boundary of $P_1$ for $\lambda>1$; in particular, either $(1,2)\in P_1$ or the boundary edge at $w$ passes through a point in the open interval $((1,2),(1,1))$. Also, as $(0,-3)$ is an interior point, no point in  $\partial P_{1}$ is present on the line $y=3x-3$.
  We realize that in this case $P_{1}$ is contained in the triangle $\conv\{p_1,w,(1,3)\}$.
  However, this triangle has no valid lattice point which could be a vertex of $P_{1}$; recall that $(1,3)$ has been excluded; see Figure~\ref{fig:cycle_two_loops}.
  This provides the desired contradiction, and thus $g(P') \leq 3$.
\end{proof}

The above result is sharp as Example~\ref{exmp:heavy-two-loops} shows.
The following summarizes the known obstructions to tropical planarity together with our new results.

\pagebreak

\begin{theorem}\label{thm:obstructions}
  A trivalent planar graph of genus $g\geq 3$ is not tropically planar if one of the following holds:
  \begin{enumerate}
  \item it contains a sprawling node, or
  \item it contains a sprawling triangle and $g\geq 5$, or
  \item it is crowded, or
  \item it is a TIE-fighter, or
  \item it has a heavy cycle with two loops such that the interior lattice points of the heavy component do not align with the intersection of the two split lines.
  \end{enumerate}
\end{theorem}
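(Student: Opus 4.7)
The plan is to treat the theorem as a consolidation statement and dispatch each item by invoking the corresponding obstruction, either from the literature or from the preceding sections of the paper. No new ideas are required beyond recognizing that each condition is the contrapositive of a realizability statement already established.

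First I would handle the three items that come from outside sources. For (1), I would cite \cite[Proposition~4.1]{CartwrightManjunathYao:2016} (alternatively \cite[Proposition~8.3]{JB15}), which shows that a trivalent planar graph with a sprawling node is not the skeleton of any unimodular triangulation of a lattice polygon. For (3), the reference is \cite[Lemma~3.5]{M17}: a crowded graph is never tropically planar. For (4), \cite[Theorem~3.4]{M19} establishes that TIE-fighter graphs are not tropically planar. Since the assumption $g \geq 3$ is automatically satisfied in each case (a graph with any of these patterns has at least this many interior-lattice-point-dual bounded regions), no further bookkeeping is needed.

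Next I would turn to the two new items. For (2), suppose $G$ is a trivalent planar graph of genus $g \geq 5$ containing a sprawling triangle, and suppose for contradiction that $G$ is tropically planar, so that $G = G(\Delta)$ for some unimodular triangulation $\Delta$ of a lattice polygon $P$. Then Theorem~\ref{thm:sprawling} applies and forces $g = 4$, contradicting $g \geq 5$. For (5), suppose $G$ has a heavy cycle with two loops whose heavy component's interior lattice points fail to lie on the line spanned by $[z,w]$, and again assume $G = G(\Delta)$ for some unimodular triangulation $\Delta$ of a lattice polygon $P$. Then Theorem~\ref{thm:heavy-two-loops} guarantees that the interior lattice points of $P'$ all lie on this very line, contradicting the hypothesis.

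None of the five parts is truly hard in this context; the only subtle point is making sure that the hypotheses of Theorems~\ref{thm:sprawling} and~\ref{thm:heavy-two-loops} line up cleanly with Definition~\ref{def:heavy} (so that the splits $S_1$, $S_2$ and the intersection point $w$ are actually well defined on any hypothetical realization), which is already covered by Lemma~\ref{lem:cut-split}, Lemma~\ref{lem:compatible}, and the remarks following Definition~\ref{def:heavy}. The main \emph{apparent} obstacle, namely the non-uniqueness of $S_1$ and $S_2$, is harmless: the conclusions of Theorems~\ref{thm:sprawling} and~\ref{thm:heavy-two-loops} must hold for any choice of split lines, so the obstructions remain valid.
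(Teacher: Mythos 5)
Your proposal is correct and matches the paper's intent exactly: the paper states Theorem~\ref{thm:obstructions} as a summary without a separate proof, with items (1), (3), (4) delegated to the cited references and items (2), (5) being immediate contrapositives of Theorem~\ref{thm:sprawling} and Theorem~\ref{thm:heavy-two-loops}, respectively. Your remarks on the non-uniqueness of the split lines and on how the hypotheses line up are sensible but not needed beyond what the preceding sections already establish.
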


\begin{figure}[th]\centering
  \begin{tikzpicture}[scale=0.25]
    \draw[] (-1.5,1.5) -- (0.5,-0.5);
    \draw[] (-1.5,1.5) -- (-3.5,3.5);
    \draw[] (-1.5,1.5) -- (0.5,3.5);
    \draw[] (0.5,-0.5) -- (2.5,-0.5);
    \draw[] (0.5,-0.5) -- (1.5,-2.23);
    \draw[] (1.5,-2.23) -- (2.5,-0.5);
    \draw[] (1.5,-2.23) -- (1.5,-4);
    \draw[] (2.5,-0.5) -- (4.5,1.5);
    \draw[] (6.5,3.5) -- (4.5,1.5);
    \draw[] (4.5,1.5) -- (2.5,3.5);
    \draw[] (1.5,-4) -- (3,-5.5);
    \draw[] (1.5,-4) -- (0,-5.5);
    
    \draw (1.5,-5.5) ellipse (1.5cm and 0.2cm);
    \draw (-1.5,3.5) ellipse (2cm and 0.2cm);
    \draw (4.5,3.5) ellipse (2cm and 0.2cm);
    
    \fill[black] (0.5,-0.5) circle (.09cm);
    \fill[black] (2.5,-0.5) circle (.09cm);
    \fill[black] (1.5,-2.23) circle (.09cm);
    \fill[black] (-1.5,1.5) circle (.09cm);
    \fill[black] (4.5,1.5) circle (.09cm);
    \fill[black] (1.5,-4) circle (.09cm);
    \fill[black] (-3.5,3.5) circle (.09cm);
    \fill[black] (0.5,3.5) circle (.09cm);
    \fill[black] (2.5,3.5) circle (.09cm);
    \fill[black] (6.5,3.5) circle (.09cm);
    \fill[black] (0,-5.5) circle (.09cm);
    \fill[black] (3,-5.5) circle (.09cm);
  \end{tikzpicture}
  \qquad
  \begin{tikzpicture}[scale=0.3]
    \draw[] (-2,0) -- (2,0);
    \draw[] (-2,0) -- (-3,1);
    \draw[] (-2,0) -- (-2,-1);
    \draw[] (2,0) -- (3,1);
    \draw[] (2,0) -- (2,-1);
    \draw[] (-2,-3) -- (-2,-4);
    \draw[] (2,-3) -- (2,-4);
    \draw[] (-2,-6) -- (2,-6);
    
    \draw (-3.35,1.35) circle (0.5cm);
    \draw (3.35,1.35) circle (0.5cm);
    \draw (-2,-2) ellipse (0.2cm and 1cm);
    \draw (-2,-5) ellipse (0.2cm and 1cm);
    \draw (2,-2) ellipse (0.2cm and 1cm);
    \draw (2,-5) ellipse (0.2cm and 1cm);
    
    \fill[black] (-2,0) circle (.09cm);
    \fill[black] (-2,-1) circle (.09cm);
    \fill[black] (-2,-3) circle (.09cm);
    \fill[black] (-2,-4) circle (.09cm);
    \fill[black] (-2,-6) circle (.09cm);
    \fill[black] (2,0) circle (.09cm);
    \fill[black] (2,-1) circle (.09cm);
    \fill[black] (2,-3) circle (.09cm);
    \fill[black] (2,-4) circle (.09cm);
    \fill[black] (2,-6) circle (.09cm);
    \fill[black] (-3,1) circle (.09cm);
    \fill[black] (3,1) circle (.09cm);
  \end{tikzpicture}
  \qquad
  \begin{tikzpicture}[scale=0.2]
    \draw[] (-1.5,1.5) -- (0.5,-0.5);
    \draw[] (-1.5,1.5) -- (-3.5,3.5);
    \draw[] (-1.5,1.5) -- (0.5,3.5);
    \draw[] (0.5,-0.5) -- (2.5,-0.5);
    \draw[] (0.5,-0.5) -- (1.5,-2.23);
    \draw[] (1.5,-2.23) -- (2.5,-0.5);
    \draw[] (1.5,-2.23) -- (1.5,-4);
    \draw[] (2.5,-0.5) -- (4.5,1.5);
    \draw[] (6.5,3.5) -- (4.5,1.5);
    \draw[] (4.5,1.5) -- (2.5,3.5);
    \draw[] (1.5,-4) -- (3,-5.5);
    \draw[] (1.5,-4) -- (0,-5.5);
    \draw[] (0,-7.5) -- (3,-7.5);
    
    \draw (0,-6.5) ellipse (0.2cm and 1cm);
    \draw (3,-6.5) ellipse (0.2cm and 1cm);
    \draw (-1.5,3.5) ellipse (2cm and 0.2cm);
    \draw (4.5,3.5) ellipse (2cm and 0.2cm);
    
    \fill[black] (0.5,-0.5) circle (.09cm);
    \fill[black] (2.5,-0.5) circle (.09cm);
    \fill[black] (1.5,-2.23) circle (.09cm);
    \fill[black] (-1.5,1.5) circle (.09cm);
    \fill[black] (4.5,1.5) circle (.09cm);
    \fill[black] (1.5,-4) circle (.09cm);
    \fill[black] (-3.5,3.5) circle (.09cm);
    \fill[black] (0.5,3.5) circle (.09cm);
    \fill[black] (2.5,3.5) circle (.09cm);
    \fill[black] (6.5,3.5) circle (.09cm);
    \fill[black] (0,-5.5) circle (.09cm);
    \fill[black] (3,-5.5) circle (.09cm);
    \fill[black] (0,-7.5) circle (.09cm);
    \fill[black] (3,-7.5) circle (.09cm);
  \end{tikzpicture}
  \qquad
  \begin{tikzpicture}[scale=0.3]
    \draw[] (-2,0) -- (2,0);
    \draw[] (-2,0) -- (-3,1);
    \draw[] (-2,0) -- (-2,-1);
    \draw[] (2,0) -- (3,1);
    \draw[] (2,0) -- (2,-1);
    \draw[] (-2,-3) -- (-2,-4);
    \draw[] (2,-3) -- (2,-4);
    \draw[] (-2,-6) -- (-1,-7);
    \draw[] (2,-6) -- (1,-7);
    
    \draw (-3.35,1.35) circle (0.5cm);
    \draw (3.35,1.35) circle (0.5cm);
    \draw (-2,-2) ellipse (0.2cm and 1cm);
    \draw (-2,-5) ellipse (0.2cm and 1cm);
    \draw (2,-2) ellipse (0.2cm and 1cm);
    \draw (2,-5) ellipse (0.2cm and 1cm);
    \draw (0,-7) ellipse (1cm and 0.2cm);
    
    \fill[black] (-2,0) circle (.09cm);
    \fill[black] (-2,-1) circle (.09cm);
    \fill[black] (-2,-3) circle (.09cm);
    \fill[black] (-2,-4) circle (.09cm);
    \fill[black] (-2,-6) circle (.09cm);
    \fill[black] (2,0) circle (.09cm);
    \fill[black] (2,-1) circle (.09cm);
    \fill[black] (2,-3) circle (.09cm);
    \fill[black] (2,-4) circle (.09cm);
    \fill[black] (2,-6) circle (.09cm);
    \fill[black] (-3,1) circle (.09cm);
    \fill[black] (3,1) circle (.09cm);
    \fill[black] (1,-7) circle (.09cm);
    \fill[black] (-1,-7) circle (.09cm);
  \end{tikzpicture}
  \caption{Examples of non realizable graphs of genus 7 and 8}
  \label{fig:g=7and8}
\end{figure}
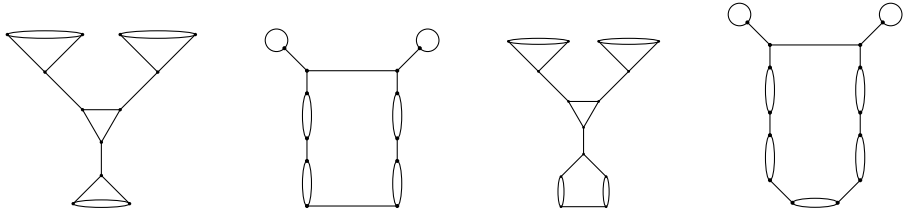

For instance, the preceding result excludes the graphs in Figure \ref{fig:g=7and8}.

\section{Anti-honeycombs}
\label{sec:anti-honey}

The purpose of this section is to define a class of lattice polygons each of which admits a special triangulation.
This is motivated by Theorem~\ref{thm:sprawling}, which characterizes one of these triangulations.
We believe that the entire family deserves some attention.
Consider three families of parallel lines:
\begin{equation}\label{eq:parallel-lines}
  L_k = \{ y {=} 2x {+} k \} \,,\ M_\ell = \{ y {=} x/2 {-} \ell \} \,,\ N_m = \{ y {=} {-} x {+} m \} \enspace ,
\end{equation}
where $k,\ell,m\in\ZZ$.
By picking a sixtuple $\pi=(k,k';\ell,\ell';m,m')$, with $k<k'$, $\ell<\ell'$, and $m<m'$ we obtain a polygon $A_\pi$ which is defined by three pairs of inequalities, where each pair comes from one of the parallel families~\eqref{eq:parallel-lines}.
The polygon $A_\pi$ has integral vertices, and we call it the \emph{anti-honeycomb polygon of type $\pi$}.
The name comes about from the connection to the \enquote{honeycomb polygons} studied in \cite[pp.~10ff]{JB15}.
Note that not all of the six inequalities need to be facet defining, whence $A_\pi$ is a hexagon, a pentagon, a quadrangle or a triangle.
For instance,
\[
  A_{(0,k;0,k;0,k)} \ = \ \conv\{(-k,-k),(0,k),(k,0)\}
\]
is a triangle, and its genus equals
\[  g(A_{(0,k;0,k;0,k)}) \ = \ \frac{3k^{2} -3k +2}{2} \enspace .\]

We fix a type $\pi=(k,k';\ell,\ell';m,m')$, and we let $V=A_{\pi}\cap\ZZ^2$ be the set of lattice points in $A_\pi$.
Intersecting with the shifted lattice
\[
  \cL \ = \
  \begin{pmatrix}
    -1\\
    -1\\
  \end{pmatrix}
  + \ZZ
  \begin{pmatrix}
    2\\
    1\\
  \end{pmatrix}
  + \ZZ
  \begin{pmatrix}
    1\\
    2\\
  \end{pmatrix}
\]
of index~3 we obtain a subset $V'=V\cap\cL$ of the lattice points in $A_\pi$.
The families of lines \eqref{eq:parallel-lines} yield weakly compatible splits of the point configuration~$V'$, and they induce a triangulation $\Delta_\pi'$ of the lattice points in $V'$.
Notice that all the points in $V \setminus V'$ lie in the interior $\interior A_{\pi}$.
Since none of these lattice points lies on any of the lines~\eqref{eq:parallel-lines} it follows that each of them is contained in the interior of a unique triangle of $\Delta_\pi'$.
Employing stellar subdivisions at the points in $V \setminus V'$ this yields a triangulation $\Delta_\pi$ of $V$, which we call the \emph{anti-honeycomb triangulation of type $\pi$}.
Figure~\ref{fig:antihoney}, Example~\ref{exmp:anti-honey-2}, and Theorem~\ref{thm:sprawling} are concerned with the case $\pi=(-2,0;-2,0;-2,0)$ of genus~4.
Figure \ref{fig:antihoney19} shows a genus 19 anti-honeycomb triangulation along with its corresponding skeleton for $\pi=(0,4;0,4;0,4)$.

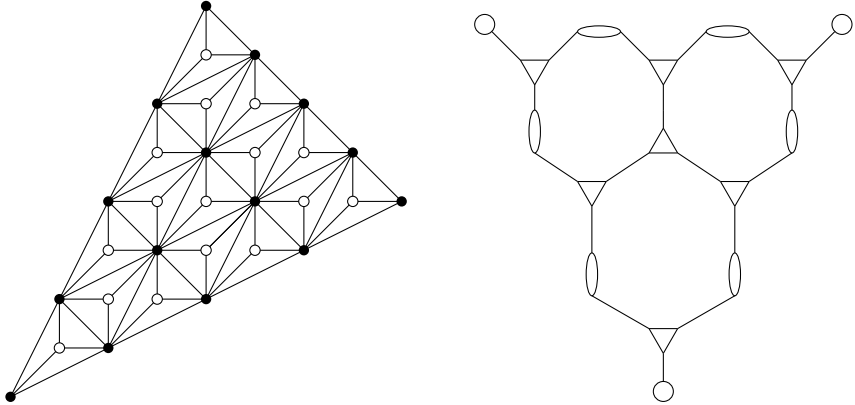
\begin{figure}\centering
  \newcommand\sz{3pt}
  \begin{tikzpicture}[scale=0.65]
    \draw[] (4,0) -- (-4,-4);
    \draw[] (4,0) -- (0,4);
    \draw[] (0,4) -- (-4,-4);
    \draw[] (-4,-4) -- (-3,-3);
    \draw[] (-3,-3) -- (-3,-2);
    \draw[] (-3,-3) -- (-2,-3);
    \draw[] (-3,-2) -- (-2,-3);
    \draw[] (-3,-2) -- (-2,-2);
    \draw[] (-2,-3) -- (-2,-2);
    \draw[] (-2,-2) -- (-1,-1);
    \draw[] (-3,-2) -- (-1,-1);
    \draw[] (-2,-3) -- (-1,-1);
    \draw[] (-2,-1) -- (-1,-1);
    \draw[] (-1,-2) -- (-1,-1);
    \draw[] (-2,-1) -- (-3,-2);
    \draw[] (-1,-2) -- (-2,-3);
    \draw[] (-1,-2) -- (0,-2);
    \draw[] (-2,-1) -- (-2,0);
    \draw[] (-2,-0) -- (-1,-1);
    \draw[] (0,-2) -- (-1,-1);
    \draw[] (-2,-0) -- (0,1);
    \draw[] (0,-2) -- (1,0);
    \draw[] (-2,0) -- (-1,1);
    \draw[] (-1,1) -- (-1,2);
    \draw[] (-1,2) -- (0,1);
    \draw[] (0,1) -- (-1,1);
    \draw[] (1,-1) -- (1,0);
    \draw[] (2,-1) -- (1,0);
    \draw[] (1,-1) -- (0,-2);
    \draw[] (1,-1) -- (2,-1);
    \draw[] (0,1) -- (2,2);
    \draw[] (2,2) -- (1,0);
    \draw[] (1,2) -- (2,2);
    \draw[] (1,2) -- (1,3);
    \draw[] (0,1) -- (1,3);
    \draw[] (0,1) -- (1,2);
    \draw[] (-1,2) -- (1,3);
    \draw[] (1,0) -- (3,1);
    \draw[] (3,1) -- (2,-1);
    \draw[] (0,3) -- (0,4);
    \draw[] (0,3) -- (-1,2);
    \draw[] (0,3) -- (1,3);
    \draw[] (0,2) -- (1,3);
    \draw[] (0,2) -- (-1,2);
    \draw[] (0,2) -- (0,1);
    \draw[] (2,1) -- (2,2);
    \draw[] (2,1) -- (3,1);
    \draw[] (1,0) -- (2,1);
    \draw[] (1,0) -- (2,0);
    \draw[] (2,0) -- (3,1);
    \draw[] (2,0) -- (2,-1);
    \draw[] (3,0) -- (3,1);
    \draw[] (3,0) -- (4,0);
    \draw[] (3,0) -- (2,-1);
    \draw[] (2,2) -- (1,1);
    \draw[] (1,1) -- (1,0);
    \draw[] (1,1) -- (0,1);
    \draw[] (0,1) -- (1,0);
    \draw[] (0,1) -- (0,0);
    \draw[] (1,0) -- (0,0);
    \draw[] (0,1) -- (-1,0);
    \draw[] (0,1) -- (-1,-1);
    \draw[] (1,0) -- (-1,-1);
    \draw[] (1,0) -- (0,-1);
    \draw[] (-1,0) -- (-2,0);
    \draw[] (-1,0) -- (-1,-1);
    \draw[] (1,0) -- (0,-1);
    \draw[] (0,-1) -- (-1,-1);
    \draw[] (0,-1) -- (0,-2);
    \draw[] (0,0) -- (-1,-1);
    \foreach \x/\y in {-3/-3, -2/-2, -2/-1, -1/-2, -1/0, 0/0, -1/1, 0/-1, 0/2, 0/3, 1/-1, 1/1, 1/2, 2/0, 2/1, 3/0}{
      \filldraw[fill=white,draw=black] (\x,\y) circle (\sz);
    }
    \foreach \x/\y in {-4/-4, -3/-2, -2/-3, 0/-2, -2/0, 2/2, 0/1, 1/0, -1/-1, -1/2, 2/-1, 4/0, 0/4, 1/3, 3/1}{
      \fill[black] (\x,\y) circle (\sz);
    }
  \end{tikzpicture}
  \qquad
  \begin{tikzpicture}[scale=0.19]
    \draw[] (0,0) -- (2,0);
    \draw[] (0,0) -- (1,-1.732);
    \draw[] (2,0) -- (1,-1.732);
    \draw[] (2,0) -- (4,2);
    \draw[] (0,0) -- (-2,2);
    \draw[] (1,-1.732) -- (1,-3.5);
    \draw (1,-5) ellipse (0.4cm and 1.5cm);
    \draw[] (1,-6.5) -- (-2,-8.5);
    \draw[] (-2,-8.5) -- (-4,-8.5);
    \draw[] (-2,-8.5) -- (-3,-10.232);
    \draw[] (-4,-8.5) -- (-3,-10.232);
    \draw[] (-3,-13.5) -- (-3,-10.232);
    \draw (-3,-15) ellipse (0.4cm and 1.5cm);
    \draw[] (-3,-16.5) -- (-7,-18.8);
    \draw[] (-4,-8.5) -- (-7,-6.5);
    \draw[] (-7,-6.5) -- (-9,-6.5);
    \draw[] (-7,-6.5) -- (-8,-4.768);
    \draw[] (-9,-6.5) -- (-8,-4.768);
    \draw[] (-8,-3.5) -- (-8,-4.768);
    \draw[] (-9,-6.5) -- (-12,-8.5);
    \draw[] (-12,-8.5) -- (-14,-8.5);
    \draw[] (-12,-8.5) -- (-13,-10.232);
    \draw[] (-14,-8.5) -- (-13,-10.232);
    \draw[] (-13,-13.5) -- (-13,-10.232);
    \draw (-13,-15) ellipse (0.4cm and 1.5cm);
    \draw[] (-13,-16.5) -- (-9,-18.8);
    \draw[] (-7,-18.8) -- (-9,-18.8);
    \draw[] (-8,-20.532) -- (-7,-18.8);
    \draw[] (-8,-20.532) -- (-9,-18.8);
    \draw[] (-8,-20.532) -- (-8,-22.5);
    \draw (-8,-23.2) circle (0.7cm);
    \draw[] (-14,-8.5) -- (-17,-6.5);
    \draw (-17,-5) ellipse (0.4cm and 1.5cm);
    \draw (4.5,2.5) circle (0.7cm);
    \draw (-3.5,2) ellipse (1.5cm and 0.4cm);
    \draw[] (-5,2) -- (-7,0);
    \draw[] (-7,0) -- (-9,0);
    \draw[] (-7,0) -- (-8,-1.732);
    \draw[] (-8,-1.732) -- (-9,0);
    \draw[] (-9,0) -- (-11,2);
    \draw[] (-8,-1.732) -- (-8,-3.5);
    \draw (-12.5,2) ellipse (1.5cm and 0.4cm);
    \draw[] (-14,2) -- (-16,0);
    \draw[] (-16,0) -- (-18,0);
    \draw[] (-16,0) -- (-17,-1.732);
    \draw[] (-17,-1.732) -- (-18,0);
    \draw[] (-18,0) -- (-20,2);
    \draw (-20.5,2.5) circle (0.7cm);
    \draw[] (-17,-1.732) -- (-17,-3.5);	
  \end{tikzpicture}
  \caption{Anti-honeycomb triangulation of genus 19 on the left, and the corresponding skeleton on the right}
  \label{fig:antihoney19}
\end{figure}

The honeycomb curves yield moduli cones of maximal dimension $2g+1$, where $g$ is the genus, cf.\ \cite[Theorem~1]{JB15}.
In contrast the anti-honeycomb curves form a large family whose moduli cones are much smaller.
For instance, a direct \polymake \cite{DMV:polymake} computation shows that the moduli cone of $\Delta_{(0,4;0,4;0,4)}$ is only 28-dimensional, whereas the upper bound $2g+1$ equals~39.

Up to an affine transformation, the three families of lines in \eqref{eq:parallel-lines} form a Coxeter hyperplane arrangement of type $\tilde A_2$.
This generalizes to arbitrary dimensions, and so does the construction of the anti-honeycomb triangulations.
The resulting anti-honeycomb polytopes are affine images of the \enquote{alcoved polytopes} of Lam and Postnikov~\cite{LamPostnikov05}.
Further details will be explored elsewhere.

\section{Conclusion}

The classication of the tropically planar graphs of genus $g\leq 5$ was obtained in \cite{JB15}.
Theorem ~\ref{thm:obstructions} now allows for a combinatorial characterization:

\begin{corollary}
  A trivalent planar graph of genus $g\leq 5$ is tropically planar if and only if none of the obstructions in Theorem~\ref{thm:obstructions} occurs.
\end{corollary}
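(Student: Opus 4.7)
The plan is to prove both directions separately. The forward direction is immediate: if $G$ is tropically planar, then by Theorem~\ref{thm:obstructions} none of the five obstructions can occur, since each such obstruction forces $G$ to fail tropical planarity. So the substance of the corollary lies in the converse: every trivalent planar graph of genus $g\le 5$ which avoids all five obstructions is actually the skeleton of some unimodular (regular) triangulation of a lattice polygon.

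For the converse I would appeal to the complete enumeration carried out in~\cite{JB15}, which lists every trivalent planar graph of genus $g\le 5$ together with the information whether it arises as the skeleton of a unimodular regular triangulation of a lattice polygon. In particular, \cite{JB15} already identifies all the \emph{non}-tropically-planar graphs in this genus range. The task then reduces to a finite case check: for each such graph on that list, exhibit one of the five obstructions from Theorem~\ref{thm:obstructions}. For $g\le 2$ all trivalent planar graphs are tropically planar, so nothing is to show. For $g=3$ the only non-realizable graph is the one with a sprawling node; for $g=4$ the remaining non-realizable graphs contain either a sprawling node, a crowded subgraph, or a TIE-fighter configuration; and for $g=5$ the newly uncovered non-realizable graphs are precisely those containing a sprawling triangle (in which case obstruction~(2) applies since $g=5$) or a heavy cycle with two loops whose interior lattice points cannot be forced to align as required by Theorem~\ref{thm:heavy-two-loops}.

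The main obstacle is the bookkeeping: going through the census in \cite{JB15} and matching each non-realizable graph to one of our obstructions by inspection of its planar embedding. The nontrivial cases are those covered by the new obstructions (2) and (5), because for these one has to verify the precise structural hypothesis, most notably that the genus is large enough in~(2) and that the heavy component's interior lattice points fail the alignment condition in~(5); the latter is where Theorem~\ref{thm:heavy-two-loops} does the work, since it converts \enquote{skeleton has a heavy cycle with two loops} into a concrete geometric constraint on $P'$ that can be checked against each candidate. Once every graph on the \cite{JB15} list has been matched to an obstruction, the corollary follows.
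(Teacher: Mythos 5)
Your proposal follows essentially the same route as the paper: the forward direction is immediate from Theorem~\ref{thm:obstructions}, and the converse is the finite case check against the census of \cite{JB15} and \cite{M19}, with the four remaining genus-5 graphs (Figure~\ref{fig:g=5_nonrealized}) disposed of by the new obstructions --- two via sprawling triangles (Theorem~\ref{thm:sprawling}) and two via heavy cycles with two loops (Theorem~\ref{thm:heavy-two-loops}). The only minor inaccuracy is at genus~4, where the paper needs only the sprawling-node obstruction (the other non-realizable graphs there are simply non-planar), but this does not affect the argument.
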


\begin{proof}
  The trivalent graphs of low genus have been classified in \cite{chemicalgraphs}.
  For $g=3$ there are five such graphs, one of which has a sprawling node; the other four are tropically planar \cite[Theorem~5.1]{JB15}.
  For $g=4$ there are 17 graphs: one is non-planar, three have a sprawling node, the remaining 13 are tropically planar \cite[Theorem~7.1]{JB15}.
  This was known before.
  
  There are exactly 71 trivalent graphs of genus 5.
  Among them only 52 are planar without a sprawling node \cite{JB15}.
  Of these 14 were ruled out by explicit computations, which leaves 38 tropically planar graphs of genus 5.
  One of the key contributions of \cite[Figure 8]{M19} is to obtain general obstructions to tropical planarity, which rules out another ten, which are crowded or TIE-fighters.

  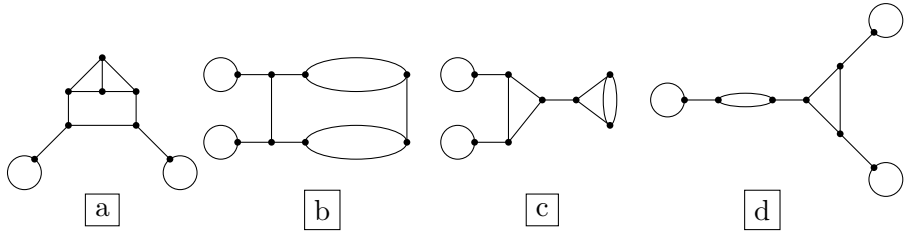
\begin{figure}[th]\centering
    \begin{tikzpicture}[scale=0.45]
      \draw[] (0,0) -- (2,0);
      \draw[] (0,0) -- (0,1);
      \draw[] (2,0) -- (2,1);
      \draw[] (0,1) -- (1,1);
      \draw[] (1,1) -- (2,1);
      \draw[] (1,1) -- (1,2);
      \draw[] (0,1) -- (1,2);
      \draw[] (2,1) -- (1,2);
      \draw[] (0,0) -- (-1,-1);
      \draw[] (2,0) -- (3,-1);
      \draw (-1.3,-1.4) circle (0.5cm);
      \draw (3.3,-1.4) circle (0.5cm);
      \node[draw] at (1,-2.5) {a};
      \fill[black] (0,0) circle (.1cm) node[align=left,   above]{};
      \fill[black] (2,0) circle (.1cm) node[align=left,   above]{};
      \fill[black] (1,1) circle (.1cm) node[align=left,   above]{};
      \fill[black] (0,1) circle (.1cm) node[align=left,   above]{};
      \fill[black] (2,1) circle (.1cm) node[align=left,   above]{};
      \fill[black] (1,2) circle (.1cm) node[align=left,   above]{};
      \fill[black] (-1,-1) circle (.1cm) node[align=left,   above]{};
      \fill[black] (3,-1) circle (.1cm) node[align=left,   above]{};
      \draw[] (6,-0.5) -- (5,-0.5);
      \draw[] (6,-0.5) -- (6,1.5);
      \draw[] (6,1.5) -- (5,1.5);
      \draw[] (6,1.5) -- (7,1.5);
      \draw[] (6,-0.5) -- (7,-0.5);
      \draw (4.5,-0.5) circle (0.5cm);
      \draw (4.5,1.5) circle (0.5cm);
      \draw (8.5,1.5) ellipse (1.5cm and 0.5cm);
      \draw (8.5,-0.5) ellipse (1.5cm and 0.5cm);
      \draw[] (10,-0.5) -- (10,1.5);
      \node[draw] at (7.5,-2.5) {b};
      \fill[black] (6,-0.5) circle (.1cm) node[align=left,   above]{};
      \fill[black] (7,-0.5) circle (.1cm) node[align=left,   above]{};
      \fill[black] (5,-0.5) circle (.1cm) node[align=left,   above]{};
      \fill[black] (6,1.5) circle (.1cm) node[align=left,   above]{};
      \fill[black] (7,1.5) circle (.1cm) node[align=left,   above]{};
      \fill[black] (5,1.5) circle (.1cm) node[align=left,   above]{};
      \fill[black] (10,-0.5) circle (.1cm) node[align=left,   above]{};
      \fill[black] (10,1.5) circle (.1cm) node[align=left,   above]{};
      \draw[] (13,-0.5) -- (12,-0.5);
      \draw[] (13,-0.5) -- (13,1.5);
      \draw[] (13,1.5) -- (12,1.5);
      \draw[] (13,1.5) -- (14,0.75);
      \draw[] (13,-0.5) -- (14,0.75);
      \draw[] (14,0.75) -- (15,0.75);
      \draw[] (15,0.75) -- (16,1.5);
      \draw[] (15,0.75) -- (16,0);
      \draw (11.5,-0.5) circle (0.5cm);
      \draw (11.5,1.5) circle (0.5cm);
      \draw (16,0.75) ellipse (0.2cm and 0.75cm);
      \node[draw] at (14,-2.5) {c};
      \fill[black] (13,-0.5) circle (.1cm) node[align=left,   above]{};
      \fill[black] (12,-0.5) circle (.1cm) node[align=left,   above]{};
      \fill[black] (13,1.5) circle (.1cm) node[align=left,   above]{};
      \fill[black] (12,1.5) circle (.1cm) node[align=left,   above]{};
      \fill[black] (14,0.75) circle (.1cm) node[align=left,   above]{};
      \fill[black] (15,0.75) circle (.1cm) node[align=left,   above]{};
      \fill[black] (16,1.5) circle (.1cm) node[align=left,   above]{};
      \fill[black] (16,0) circle (.1cm) node[align=left,   above]{};
      \draw (20,0.75) ellipse (0.8cm and 0.2cm);
      \draw[] (19.2,0.75) -- (18.2,0.75);
      \draw[] (20.8,0.75) -- (21.8,0.75);
      \draw[] (21.8,0.75) -- (22.8,1.75);
      \draw[] (21.8,0.75) -- (22.8,-0.25);
      \draw[] (22.8,1.75) -- (22.8,-0.25);
      \draw[] (22.8,1.75) -- (23.8,2.75);
      \draw[] (22.8,-0.25) -- (23.8,-1.25);
      \draw (17.7,0.75) circle (0.5cm);
      \draw (24.15,3.10) circle (0.5cm);
      \draw (24.15,-1.6) circle (0.5cm);
      \node[draw] at (20.5,-2.5) {d};
      \fill[black] (19.2,0.75) circle (.1cm) node[align=left,   above]{};
      \fill[black] (20.8,0.75) circle (.1cm) node[align=left,   above]{};
      \fill[black] (21.8,0.75) circle (.1cm) node[align=left,   above]{};
      \fill[black] (18.2,0.75) circle (.1cm) node[align=left,   above]{};
      \fill[black] (22.8,1.75) circle (.1cm) node[align=left,   above]{};
      \fill[black] (22.8,-0.25) circle (.1cm) node[align=left,   above]{};
      \fill[black] (23.8,2.75) circle (.1cm) node[align=left,   above]{};
      \fill[black] (23.8,-1.25) circle (.1cm) node[align=left,   above]{};
    \end{tikzpicture}
    \caption{The four genus 5 graphs that are not ruled out by any prior known criteria}
    \label{fig:g=5_nonrealized}
  \end{figure}

  As our new contribution we can now discuss the remaining four graphs, which are shown in Figure~\ref{fig:g=5_nonrealized}.
  Firstly, we observe that all of these exhibit a heavy cycle.
  The graph labeled \enquote{a} has a heavy cycle with two loops, but the component away from the two loops is not hyperelliptic; i.e., it is ruled out by Theorem~\ref{thm:heavy-two-loops}.
  The second graph, labeled \enquote{b} also has a heavy cycle with two loops, the component (of genus 3) away from the two loops is even hyperelliptic.
  However, the three interior lattice points of the genus~3 component do not lie on a common line with the point of intersection of the two splits.
  Thus \enquote{b} is ruled out by Theorem~\ref{thm:heavy-two-loops}, too.
  The graphs labeled \enquote{c} and \enquote{d} feature sprawling triangles, whence they are ruled out by Theorem~\ref{thm:sprawling}.
  This completes our combinatorial characterization of the tropically planar graphs of genus at most five.
\end{proof}

For genus 6, there are 388 trivalent graphs altogether, 354 of which are planar \cite{chemicalgraphs}.
In \cite{M19} it was shown that 152 tropically planar graphs of genus~6 remain.
There are 28 graphs which are non-realizable and could not be ruled out using any prior known criteria; cf.\ \cite[Figure 17]{M19}.
Out of these 28 graphs, 19 have a heavy cycle with two loops and can be ruled out using Theorem \ref{thm:heavy-two-loops} because the genus is too high.
One of the remaining graphs has a sprawling triangle, and thus excluded by Theorem~\ref{thm:sprawling}.
We are left with eight graphs of genus 6, which are shown in Figure \ref{fig:genus6_uncategorized}; for these we are not aware of any a priori obstruction.

It would be interesting to know how the tropically plane curves of a fixed genus fit into the moduli space of all tropical curves.
For genus~3 this was recently answered in terms of modifications by Hahn et al.~\cite{HahnMarkwigRenTyomkin:2019.02440}.

	\begin{figure}\centering
		\begin{tikzpicture}[scale=0.4]
		\draw[] (0,0) -- (2,0);
		\draw[] (0,0) -- (0,-2);
		\draw[] (0,-2) -- (2,-2);
		\draw[] (2,0) -- (1.5,-1);
		\draw[] (2,0) -- (2.5,-1);
		\draw[] (1.5,-1) -- (2.5,-1);
		\draw[] (1.5,-1) -- (2,-2);
		\draw[] (2.5,-1) -- (2,-2);
		\draw[] (0,0) -- (-1,0);
		\draw[] (-1,0) -- (-2,-1);
		\draw[] (-1,0) -- (-2,1);
		\draw[] (0,-2) -- (-1,-2);
		\draw (-1.5,-2) circle (0.5cm);
		\draw (-2,0) ellipse (0.2cm and 1cm);
		\node[draw] at (0,-3) {a};
		\fill[black] (0,0) circle (.1cm) node[align=left,   above]{};
		\fill[black] (0,-2) circle (.1cm) node[align=left,   above]{};
		\fill[black] (2,0) circle (.1cm) node[align=left,   above]{};
		\fill[black] (-1,0) circle (.1cm) node[align=left,   above]{};
		\fill[black] (-2,-1) circle (.1cm) node[align=left,   above]{};
		\fill[black] (-2,1) circle (.1cm) node[align=left,   above]{};
		\fill[black] (-1,-2) circle (.1cm) node[align=left,   above]{};
		\fill[black] (0,-2) circle (.1cm) node[align=left,   above]{};
		\fill[black] (2,-2) circle (.1cm) node[align=left,   above]{};
		\fill[black] (1.5,-1) circle (.1cm) node[align=left,   above]{};
		\fill[black] (2.5,-1) circle (.1cm) node[align=left,   above]{};
		\end{tikzpicture}
		\qquad
		\begin{tikzpicture}[scale=0.4]
		\draw[] (15,0) -- (16,0);
		\draw[] (13,0) -- (13,-2);
		\draw[] (15,-2) -- (16,-2);
		\draw[] (16,0) -- (16,-2);
		\draw[] (16,-2) -- (17,-2);
		\draw[] (16,0) -- (17,0);
		\draw[] (17,0) -- (18,1);
		\draw[] (17,0) -- (18,-1);
		\draw (17.5,-2) circle (0.5cm);
		\draw (14,0) ellipse (1cm and 0.3cm);
		\draw (14,-2) ellipse (1cm and 0.3cm);
		\draw (18,0) ellipse (0.2cm and 1cm);
		\node[draw] at (16,-3) {b};
		\fill[black] (13,0) circle (.1cm) node[align=left,   above]{};
		\fill[black] (13,-2) circle (.1cm) node[align=left,   above]{};
		\fill[black] (15,0) circle (.1cm) node[align=left,   above]{};
		\fill[black] (15,-2) circle (.1cm) node[align=left,   above]{};
		\fill[black] (16,0) circle (.1cm) node[align=left,   above]{};
		\fill[black] (16,-2) circle (.1cm) node[align=left,   above]{};
		\fill[black] (17,0) circle (.1cm) node[align=left,   above]{};
		\fill[black] (18,1) circle (.1cm) node[align=left,   above]{};
		\fill[black] (18,-1) circle (.1cm) node[align=left,   above]{};
		\fill[black] (17,-2) circle (.1cm) node[align=left,   above]{};
		\end{tikzpicture}
		\qquad
		\vspace{0.3cm}
		\begin{tikzpicture}[scale=0.4]
		\draw[] (0,-4) -- (2,-4);
		\draw[] (0,-4) -- (0,-6);
		\draw[] (0,-6) -- (2,-6);
		\draw[] (0,-4) -- (-2,-4);
		\draw[] (0,-6) -- (-2,-6);
		\draw[] (2,-4) -- (2,-6);
		\draw[] (2,-4) -- (3,-4);
		\draw[] (2,-6) -- (3,-6);
		\draw[] (5,-6) -- (6,-6);
		\draw (3.5,-4) circle (0.5cm);
		\draw (6.5,-6) circle (0.5cm);
		\draw (-2,-5) ellipse (0.2cm and 1cm);
		\draw (4,-6) ellipse (1cm and 0.2cm);
		\node[draw] at (0,-7) {c};
		\fill[black] (0,-4) circle (.1cm) node[align=left,   above]{};
		\fill[black] (0,-6) circle (.1cm) node[align=left,   above]{};
		\fill[black] (-2,-6) circle (.1cm) node[align=left,   above]{};
		\fill[black] (2,-6) circle (.1cm) node[align=left,   above]{};
		\fill[black] (3,-6) circle (.1cm) node[align=left,   above]{};
		\fill[black] (5,-6) circle (.1cm) node[align=left,   above]{};
		\fill[black] (2,-4) circle (.1cm) node[align=left,   above]{};
		\fill[black] (-2,-4) circle (.1cm) node[align=left,   above]{};
		\fill[black] (3,-4) circle (.1cm) node[align=left,   above]{};
		\end{tikzpicture}
		\qquad
		\begin{tikzpicture}[scale=0.4]
		\draw[] (12,-4) -- (12,-6);
		\draw[] (12,-6) -- (11,-6);
		\draw[] (12,-4) -- (13,-4);
		\draw[] (12,-6) -- (13,-6);
		\draw[] (13,-4) -- (13,-6);
		\draw[] (13,-4) -- (14,-5);
		\draw[] (13,-6) -- (14,-5);
		\draw[] (14,-5) -- (15,-5);
		\draw[] (11,-4) -- (12,-4);
		\draw[] (9,-4) -- (8,-4);
		\draw (10,-4) ellipse (1cm and 0.2cm);
		\draw (7.5,-4) circle (0.5cm);
		\draw (10.5,-6) circle (0.5cm);
		\draw (15.5,-5) circle (0.5cm);
		\node[draw] at (12,-7) {d};
		\fill[black] (12,-4) circle (.1cm) node[align=left,   above]{};
		\fill[black] (11,-4) circle (.1cm) node[align=left,   above]{};
		\fill[black] (9,-4) circle (.1cm) node[align=left,   above]{};
		\fill[black] (8,-4) circle (.1cm) node[align=left,   above]{};
		\fill[black] (13,-4) circle (.1cm) node[align=left,   above]{};
		\fill[black] (12,-6) circle (.1cm) node[align=left,   above]{};
		\fill[black] (11,-6) circle (.1cm) node[align=left,   above]{};
		\fill[black] (13,-6) circle (.1cm) node[align=left,   above]{};
		\fill[black] (14,-5) circle (.1cm) node[align=left,   above]{};
		\fill[black] (15,-5) circle (.1cm) node[align=left,   above]{};
		\end{tikzpicture}
		\qquad
		\begin{tikzpicture}[scale=0.4]
		\draw[] (1,-9.5) -- (1,-11.5);
		\draw[] (1,-9.5) -- (-2,-9.5);
		\draw[] (-2,-11.5) -- (-2,-9.5);
		\draw[] (1,-9.5) -- (4,-9.5);
		\draw[] (4,-9.5) -- (4,-11.5);
		\draw[] (1,-11.5) -- (0,-11.5);
		\draw[] (1,-11.5) -- (2,-11.5);
		\draw[] (-2,-9.5) -- (-2,-9);
		\draw[] (4,-9.5) -- (4,-9);
		\draw (4,-8.5) circle (0.5cm);
		\draw (-2,-8.5) circle (0.5cm);
		\draw (-1,-11.5) ellipse (1cm and 0.2cm);
		\draw (3,-11.5) ellipse (1cm and 0.2cm);
		\node[draw] at (1,-12.5) {e};
		\fill[black] (1,-9.5) circle (.1cm) node[align=left,   above]{};
		\fill[black] (-2,-9.5) circle (.1cm) node[align=left,   above]{};
		\fill[black] (4,-9.5) circle (.1cm) node[align=left,   above]{};
		\fill[black] (1,-11.5) circle (.1cm) node[align=left,   above]{};
		\fill[black] (0,-11.5) circle (.1cm) node[align=left,   above]{};
		\fill[black] (2,-11.5) circle (.1cm) node[align=left,   above]{};
		\fill[black] (-2,-9) circle (.1cm) node[align=left,   above]{};
		\fill[black] (4,-9) circle (.1cm) node[align=left,   above]{};
		\fill[black] (-2,-11.5) circle (.1cm) node[align=left,   above]{};
		\fill[black] (4,-11.5) circle (.1cm) node[align=left,   above]{};
		\end{tikzpicture}
		\qquad
		\begin{tikzpicture}[scale=0.4]
		\draw[] (9,-9) -- (11,-9);
		\draw[] (9,-11) -- (11,-11);
		\draw[] (11,-9) -- (10,-10);
		\draw[] (10,-10) -- (11,-11);
		\draw[] (10,-10) -- (11,-10);
		\draw[] (11,-9) -- (14,-9);
		\draw[] (11,-11) -- (14,-11);
		\draw[] (14,-9) -- (14,-11);
		\draw[] (11,-10) -- (14,-9);
		\draw[] (11,-10) -- (14,-11);
		\draw[] (12,-9.66) -- (12,-10.34);
		\draw (8.5,-9) circle (0.5cm);
		\draw (8.5,-11) circle (0.5cm);
		\node[draw] at (10,-12) {f};
		\fill[black] (9,-9) circle (.1cm) node[align=left,   above]{};
		\fill[black] (11,-9) circle (.1cm) node[align=left,   above]{};
		\fill[black] (10,-10) circle (.1cm) node[align=left,   above]{};
		\fill[black] (9,-11) circle (.1cm) node[align=left,   above]{};
		\fill[black] (11,-11) circle (.1cm) node[align=left,   above]{};
		\fill[black] (11,-10) circle (.1cm) node[align=left,   above]{};
		\fill[black] (12,-9.66) circle (.1cm) node[align=left,   above]{};
		\fill[black] (12,-10.34) circle (.1cm) node[align=left,   above]{};
		\fill[black] (14,-9) circle (.1cm) node[align=left,   above]{};
		\fill[black] (14,-11) circle (.1cm) node[align=left,   above]{};
		\end{tikzpicture}
		\qquad
		\begin{tikzpicture}[scale=0.4]
		\draw[] (0,-15) -- (-1,-15);
		\draw[] (0,-15) -- (1,-14);
		\draw[] (0,-15) -- (1,-16);
		\draw[] (1,-14) -- (3,-15);
		\draw[] (1,-16) -- (3,-15);
		\draw[] (2,-15.5) -- (2,-14.5);
		\draw[] (1,-14) -- (5,-14);
		\draw[] (1,-16) -- (5,-16);
		\draw[] (3,-15) -- (4,-15);
		\draw[] (4,-15) -- (5,-14);
		\draw[] (4,-15) -- (5,-16);
		\draw[] (5,-14) -- (5,-16);
		\draw (-1.5,-15) circle (0.5cm);
		\node[draw] at (2,-17.5) {g};
		\fill[black] (1,-14) circle (.1cm) node[align=left,   above]{};
		\fill[black] (5,-14) circle (.1cm) node[align=left,   above]{};
		\fill[black] (1,-16) circle (.1cm) node[align=left,   above]{};
		\fill[black] (5,-16) circle (.1cm) node[align=left,   above]{};
		\fill[black] (0,-15) circle (.1cm) node[align=left,   above]{};
		\fill[black] (-1,-15) circle (.1cm) node[align=left,   above]{};
		\fill[black] (2,-14.5) circle (.1cm) node[align=left,   above]{};
		\fill[black] (2,-15.5) circle (.1cm) node[align=left,   above]{};
		\fill[black] (3,-15) circle (.1cm) node[align=left,   above]{};
		\fill[black] (4,-15) circle (.1cm) node[align=left,   above]{};
		\end{tikzpicture}
		\qquad
		\begin{tikzpicture}[scale=0.4]
		\draw[] (8,-16) -- (9,-16);
		\draw[] (9,-16) -- (10,-15.5);
		\draw[] (9,-16) -- (10,-16.5);
		\draw[] (10,-15.5) -- (10,-16.5);
		\draw[] (10,-16.5) -- (13,-16.5);
		\draw[] (10,-15.5) -- (11.5,-14);
		\draw[] (11.5,-13) -- (11.5,-14);
		\draw[] (13,-15.5) -- (11.5,-14);
		\draw[] (13,-15.5) -- (13,-16.5);
		\draw[] (13,-15.5) -- (14,-16);
		\draw[] (13,-16.5) -- (14,-16);
		\draw[] (14,-16) -- (15,-16);
		\draw (7.5,-16) circle (0.5cm);
		\draw (15.5,-16) circle (0.5cm);
		\draw (11.5,-12.5) circle (0.5cm);
		\node[draw] at (12,-17.5) {h};
		\fill[black] (8,-16) circle (.1cm) node[align=left,   above]{};
		\fill[black] (9,-16) circle (.1cm) node[align=left,   above]{};
		\fill[black] (10,-15.5) circle (.1cm) node[align=left,   above]{};
		\fill[black] (10,-16.5) circle (.1cm) node[align=left,   above]{};
		\fill[black] (11.5,-14) circle (.1cm) node[align=left,   above]{};
		\fill[black] (11.5,-13) circle (.1cm) node[align=left,   above]{};
		\fill[black] (13,-15.5) circle (.1cm) node[align=left,   above]{};
		\fill[black] (13,-16.5) circle (.1cm) node[align=left,   above]{};
		\fill[black] (14,-16) circle (.1cm) node[align=left,   above]{};
		\fill[black] (15,-16) circle (.1cm) node[align=left,   above]{};
		\end{tikzpicture}
		\caption{The eight trivalent planar graphs of genus 6, which are not tropically planar \cite{M19}, but which are not covered by Theorem~\ref{thm:obstructions}.}
		\label{fig:genus6_uncategorized}
	\end{figure}
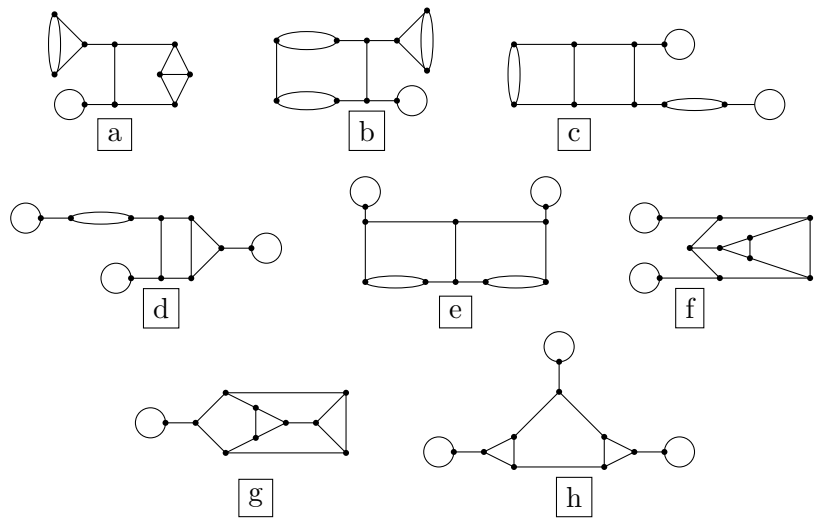

\bibliographystyle{siam}
\bibliography{biblio.bib}

\end{document}